\documentclass[a4paper,11pt,reqno]{amsart}

\usepackage[english]{babel}
\usepackage{amssymb}
\usepackage{mathtools}
\usepackage{tikz-cd}
\usepackage{xcolor}

\definecolor{darkgreen}{rgb}{0,0.50,0}
\definecolor{darkred}{rgb}{0.55,0,0}
\definecolor{darkblue}{rgb}{0,0,0.6}

\usepackage[
colorlinks=true,
citecolor=darkgreen,
linkcolor=darkred,
urlcolor=darkblue
]{hyperref}

\newtheorem*{thmB'}{Theorem B$'$}

\newtheorem{introthm}{Theorem}

\newtheorem*{thm*}{Theorem}

\newtheorem{theorem}{Theorem}[section]
\newtheorem{cor}{Corollary}[section]
\newtheorem{proposition}{Proposition}[section]
\newtheorem{lemma}{Lemma}[section]

\theoremstyle{definition}
\newtheorem{definition}{Definition}[section]
\newtheorem*{conv}{Convention}
\newtheorem{rem}{Remark}[section]

\makeatletter
\let\c@theorem\c@thm
\let\c@cor\c@thm
\let\c@proposition\c@thm
\let\c@lemma\c@thm
\let\c@definition\c@thm
\let\c@rem\c@thm
\makeatother

\counterwithout{equation}{section}

\newcommand{\ev}{\mathrm{ev}}
\newcommand{\pr}{\mathrm{pr}}
\newcommand{\gr}{\mathrm{gr}}
\newcommand{\Sym}{\mathrm{Sym}}
\newcommand{\E}{\mathbb{E}}
\newcommand{\T}{\mathcal{T}}
\newcommand{\Sign}{\mathrm{Sign}}
\newcommand{\p}{\mathcal{P}}
\newcommand{\U}{\mathcal{U}}

\newcommand{\n}{\mathbf{n}}
\newcommand{\nf}{\mathbf{[n(f)]}}
\newcommand{\bp}{\mathbf{p}}

\newcommand{\res}{\mathrm{Res}}
\newcommand{\diag}{\mathrm{diag}}

\newcommand{\Xn}{M(n)}
\newcommand{\Xnp}{M(n+1)}
\newcommand{\Xinfty}{M(\infty)}
\newcommand{\Minfty}{M(\infty)}

\newcommand{\dr}{\Delta_r}
\newcommand{\mr}{\mu_r}

\newcommand{\id}{\mathrm{id}}
\newcommand{\Hom}{\textup{Hom}}

\newcommand{\R}{\mathbb R}
\newcommand{\Z}{\mathbb Z}
\newcommand{\Q}{\mathbb Q}
\newcommand{\C}{\mathbb C}

\def\={\;=\;}  \def\+{\,+\,}   \def\thin{\hskip 1pt}
\def\be{\begin{equation} }  \def\ee{\end{equation}}

\newlength{\storeparskip}
\author[J. Semikina]{Julia Semikina}
\address{University of Lille, UMR 8524 - Paul Painlevé Mathematics Laboratory, F-59000 Lille, France}
\email{iuliia.semikina@univ-lille.fr}

\author[D. Zagier]{Don Zagier}
\address{\shortstack[l]{%
		Max Planck Institute for Mathematics, 53111 Bonn, Germany \textit{and}\\
		International Centre for Theoretical Physics, 34014 Trieste, Italy
}}
\email{don.zagier@mpim-bonn.mpg.de}

\date{}

\title{$L\thin$-classes of symmetric products: a homological formula}

\begin{document}
	
	\begin{abstract} 
If $M$ is a closed oriented manifold of even dimension, then its $n$th symmetric product $M(n)=M^n/S_n$
is a rational homology manifold for any~$n \geq 0$ and hence has a canonically defined $L$-class 
 in~$H^*(M(n);\Q)$. A complicated formula for these classes was given many years ago in the second author's thesis~\cite{ZagierThesis}. We obtain a simpler result by studying the {\it dual} classes  in homology, showing
that their generating function (after embedding each $H_*(M(n); \Q)$ into the Hopf algebra $H_*(\Minfty; \Q)\thin$)
is the product of an elementary factor depending only on the Euler characteristic of~$M$ and the exponential 
of a series consisting of odd Adams twists of the Poincar\'e dual of the $L$-class of~$M$. We also use this to obtain 
both a simpler proof and a simpler expression for the cohomological $L$-classes than the previous one. 
\end{abstract}

\maketitle
 
\vspace{-2ex}

\begingroup%
\setlength{\parskip}{\storeparskip}
\setcounter{tocdepth}{1}
\tableofcontents
\endgroup%

\setcounter{section}{0}

\section*{Introduction}

The Hirzebruch signature theorem says that
$$   \Sign(X) \= \langle L(X), [X]\rangle $$
for any closed oriented differentiable manifold~$X$, where  $L(X)\in H^*(X;\Q)$, the
{\it $L$-class} of~$X$, is determined by, and determines, the rational Pontryagin class of~$X$.
Using this, Thom~\cite{thom1958} extended the definition of the $L$-class to triangulated manifolds and Milnor~\cite{milnor1974characteristic} to oriented rational homology manifolds (the rational Pontryagin 
classes come along with it, though we will not use them).
In~\cite{zagier1972} and~\cite{ZagierThesis} the second author calculated this
$L$-class first for quotients of an oriented manifold by a finite group action and then in detail 
for the case when $X$ is the $n$-th symmetric product  $M(n)=M^n/S_n$ of a  closed 
orientable differentiable manifold~$M$ of even dimension~$2s$.  (The evenness is necessary to make
$X$~orientable).  The final formula had the attractive shape
 \be\label{eq: main Don's thesis thm}  
     L(M(n)) \ = \ j_n^* \big(Q_s(\eta)^{n+1} \thin G \big)\thin,    \ee
where $j_n$ denotes the canonical inclusion of~$M(n)$ into $M(\infty)$ (after fixing a base point), $\eta$ is a
canonically defined class in~$ H^{2s}(M(\infty),\Q)$, $Q_s(t)\in\Q[[t]]$ is a fairly simple power series depending 
only on~$s$ (given in~\eqref{DefQs} and~\eqref{Defgs} below), and $G \in H^*(\Minfty)$ is independent of $n$.  However,
the expression for $G$ given in~\cite{ZagierThesis} was very complicated and the proof was also quite intricate. 
In this paper we will give a  more elegant formula for the class~$G$ and also a much simpler proof by working in homology rather than cohomology. 

Specifically, define $l(X)\in H_*(X;\Q)$, the {\it homological $l$-class} of~$X$, for any rational homology 
manifold~$X$ as the Poincar\'e dual of the class~$L(X)$.
The classes $l(M(n))$ lie in different groups, so to make a generating series we must take their push-forwards
via~$j_n$ to $H_*(\Minfty)$, which has the structure of  a $\Q$-algebra via the Pontryagin product induced by 
the monoid structure of~$\Minfty\thin$. Our main result is then:
 \begin{introthm} \label{main theorem: homological formula for l classes}
     Let $M$ be a compact oriented smooth manifold of even dimension. Then     
\be  \label{lSeries}
 \sum_{n=0}^{\infty} \, (j_n)_* \, l(\Xn) \cdot t^n \= \bigl(1-t^2\bigr)^{-e(M)/2}  \thin\cdot\thin
e^{\Psi(l(M))} \ \in \ H_*(\Minfty)[[t]]\thin .       \ee
Here $e(M)$ denotes the Euler characteristic of $M$ and $\Psi\thin:\thin H_*(M)\to H_*(\Minfty)[[t]]$
is a degree-preserving map encoding the twists by odd Adams operations.
 \end{introthm}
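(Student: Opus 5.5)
We would reduce the identity to a statement about the $L$-class of a quotient $X/G$ by a finite group, as in~\cite{zagier1972}: for a finite group $G$ acting on an oriented manifold $X$ preserving orientation, $\pi^*$ identifies $H^*(X/G;\Q)$ with $H^*(X;\Q)^G$, and the homological class is
\[
l(X/G) \;=\; \frac{1}{|G|}\sum_{g\in G} \pi_*\bigl(i_g\bigr)_*\, l(X^g, g),
\]
where $i_g\colon X^g\hookrightarrow X$ is the inclusion of the fixed set and $l(X^g,g)$ is the Poincar\'e dual in $X^g$ of the equivariant ($G$-signature) $L$-class $L(X^g)\cdot \mathcal{L}(\nu_g, g)$. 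Here $\mathcal{L}(\nu_g,g)$ is the characteristic class of the normal bundle twisted by the eigenvalues of $g$, coming from the Atiyah--Singer $G$-signature theorem. We apply this with $X=M^n$ and $G=S_n$, and push forward along $j_n$.

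Next we compute each term. For a permutation $\sigma$ with cycle type $(1^{a_1}2^{a_2}\cdots)$, the fixed set of $\sigma$ on $M^n$ is $\prod_k (\Delta_k M)^{a_k}\cong M^{\sum a_k}$, with each $k$-cycle contributing a diagonal copy of $M$. The normal bundle of the diagonal $M\subset M^k$ is $TM\otimes(\C^k/\C)$, and $\sigma$ acts there by the nontrivial $k$-th roots of unity $\zeta$. The local contribution therefore factors over cycles. A $k$-cycle contributes the pushforward to $H_*(\Minfty)$, via the $k$-fold diagonal $\Delta_k\colon M\to M(k)$, of the Poincar\'e dual of $L(M)\prod_{\zeta^k=1,\zeta\neq1}\mathcal{L}_\zeta(TM)$. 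Since the image of $S_n$-conjugacy classes are counted by $n!/\prod k^{a_k}a_k!$, and the Pontryagin product is commutative, the exponential formula gives
\[
\sum_n (j_n)_*\,l(M(n))\,t^n \;=\; \exp\Bigl(\sum_{k\ge1}\frac{t^k}{k}\,(\Delta_k)_*\,c_k\Bigr),
\]
where $c_k\in H_*(M)$ is the dual of the $k$-cycle local class.

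The remaining and main step is to identify $c_k$. We would use the splitting principle with Chern roots $\pm x_j$ of $TM\otimes\C$. The product over $\zeta$ of $\frac{1+\zeta e^{-2x}}{1-\zeta e^{-2x}}$-type factors telescopes, via $\prod_{\zeta^k=1}(1-\zeta y)=1-y^k$, to the $L$-genus evaluated at $kx$ divided by the normal Euler class.

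For $k$ even, the $\zeta=-1$ eigenvalue contributes a factor $\tanh$ versus $\coth$ that kills all terms except a pure Euler-class piece. So the even cycles contribute only $\langle e(M),[M]\rangle$ times a point class, with coefficient $t^{k}/k$. Summing these gives $-\tfrac{e(M)}{2}\log(1-t^2)$, and hence the factor $(1-t^2)^{-e(M)/2}$, with point classes normalised to $1$ in $H_0$.

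For $k$ odd, the telescoping should give precisely $\psi^k$ of $L(M)$ on the Adams-twisted classes, up to a power of $k$ from the degree, dualised. This is the term defining $\Psi(l(M))=\sum_{k\text{ odd}}\frac{t^k}{k}(\Delta_k)_*\psi^k l(M)$.

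The obstacle is this eigenvalue bookkeeping: getting the normalisation of the $G$-signature class right, including signs and the orientation of the complex structure on $\nu$, and checking that the even-$k$ contributions really collapse to Euler characteristic terms.
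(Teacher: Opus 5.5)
Your proposal follows the paper's proof essentially step for step: the homological averaging formula over $S_n$, multiplicativity of the fixed-point contributions over the cycle decomposition, the exponential formula yielding $\exp\bigl(\sum_r \lambda_r t^r/r\bigr)$ with $\lambda_r=(\Delta_r)_*l'(\sigma_r)$, and the single-cycle computation via $\prod_{k=0}^{r-1}\coth(z+i\pi k/r)$, which equals $\coth(rz)$ for odd $r$ and $1$ for even $r$. The eigenvalue bookkeeping you flag is exactly the paper's computation of the $L'$-class of a single cycle, which gives $L'(\sigma_r,M^r)=\prod_j x_j/\tanh(rx_j)=r^{-s}\psi_r(L(M))$ for odd $r$ (with no division by a normal Euler class) and $e(TM)$ for even $r$.
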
 
Note that the degree~0 part of this theorem recovers the generating series
\be \label{eq: gen series for signature}
    \sum_{n=0}^{\infty} \; \Sign ( M(n)) \, t^n \= (1-t^2)^{-e(M)/2} \, \Big( \frac{1+t}{1-t}\Big)^{\Sign(M)/2}\,,
\ee
given by Hirzebruch~\cite{HirzGenSer}, and also that it implies that the $L$-classes of all
symmetric products of~$M$ are completely determined by the cohomology ring (with its Poincar\'e duality
structure) and Hirzebruch $L$-class of~$M$ itself. 

We will apply Theorem~\ref{main theorem: homological formula for l classes} to give a new short proof of the 
original formula~\eqref{eq: main Don's thesis thm}\thin:
 \begin{introthm} \label{thmB}
Let $M(n)$ be the $n$-th symmetric product of $M$. Then
  \begin{equation*} 
     L(M(n)) \ = \ j_n^*\bigr(Q_s(\eta)^{n+1} \thin G\bigr),
  \end{equation*}
where $s=\frac{1}{2} \dim(M)$, $Q_s(t)\in\Q[[t]]$ is the power series 
\be\label{DefQs} Q_s(t)\=\frac{t}{f_s(t)}
 \= 1 \+ \frac 1{3^s}\thin t^2 \+ \Bigl(\frac 1{5^s}-\frac 2{9^s}\Bigr)\thin t^4 \+ \cdots \ee
with $f_s(t)$ defined as the inverse power series of the odd polylogarithm
\be\label{Defgs} 
g_s(t)\ = \ \sum_{\substack{r\geq 1\\ r  \mathrm{~odd}}} \frac{t^r}{r^s} \ = \ t+\frac{t^3}{3^s}+\frac{t^5}{5^s}+\cdots\,, 
\ee
and $G$ is a class in $H^*(\Minfty)$ independent of $n$.
\end{introthm}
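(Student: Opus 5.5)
We derive Theorem~\ref{thmB} from Theorem~\ref{main theorem: homological formula for l classes} by passing from homology to cohomology through Poincar\'e duality on each $M(n)$. The relation is $L(M(n))\cap[M(n)] = l(M(n))$. Pulling back along $j_n$ and capping with $[M(n)]$ turns a cohomology class $x\in H^*(\Minfty)$ into $j_{n*}\bigl(j_n^*x\cap[M(n)]\bigr) = x\cap j_{n*}[M(n)]$. It therefore suffices to find a class $G$, independent of $n$, such that for every $n$
\[
Q_s(\eta)^{n+1}G\cap (j_n)_*[M(n)] \= (j_n)_*\,l(M(n)).
\]
Injectivity is not an issue, because $j_n^*$ composed with capping is exactly the Poincar\'e duality isomorphism. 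The claim thus becomes an identity in $H_*(\Minfty)$, which we check on generating series in~$t$.

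\textbf{Step 1: cap products with Pontryagin products.} Put $u=\sum_n (j_n)_*[M(n)]\,t^n$. Since $[M(n)]$ is the image of $[M]^{\otimes n}/n!$-type classes, $u$ should be an exponential of a primitive series. More precisely, $u=\exp\bigl(\sum_r \psi^r_*[M]\,t^r/r\bigr)$ in the Hopf algebra $H_*(\Minfty)$, which is the case $l(M)\mapsto[M]$ of the structural formula. The cohomology $H^*(\Minfty)$ acts on the Hopf algebra $H_*(\Minfty)$ by cap product, and this action is compatible with the coproduct. For the class $\eta\in H^{2s}$, which restricts to the ``diagonal'' class, capping is a derivation-like operation on the Pontryagin product:
\[
\eta\cap(ab)=(\eta\cap a)b+a(\eta\cap b),
\]
since $\eta$ is primitive, up to the grading convention. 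More generally, for a multiplicative class $e^{\lambda\eta}$, the operation $e^{\lambda\eta}\cap{}$ is an algebra endomorphism. So capping a power series in $\eta$ against $e^{\Psi(\cdot)}$ can be computed term by term on the primitive generators $\psi^r_*(a)\,t^r$. Evaluating $\eta\cap\psi^r_*(a)$ should give a scalar multiple of $\psi^r_*$ of a lower class: $\eta$ pairs with $[M]$ and carries weight $r^{s}$-type factors coming from the Adams operation on top degree.

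\textbf{Step 2: reduce to a one-variable identity.} Introduce an auxiliary formal variable and write $Q_s(\eta)^{n+1}$ as a residue: the coefficient of $t^n$ in $Q_s(\eta)^{n+1}$-weighted series is extracted by Lagrange inversion. This is where $f_s$, the inverse of the odd polylogarithm $g_s$, enters. The odd Adams operations $\psi^r$ for odd $r$ produce coefficients $\sum_{r \text{ odd}} t^r/r^s=g_s(t)$ when paired with $\eta$. Lagrange inversion converts ``$n$-th coefficient of $\phi(t)^{n+1}$'' into a substitution $t\mapsto f_s(\cdot)$. Concretely, we aim to show that capping $Q_s(\eta)^{n+1}$ against $\sum_m (j_m)_*[M(m)]t^m$ and extracting degree $n$ reproduces the change of variables $t= g_s(\tau)$ in the series $(1-t^2)^{-e/2}e^{\Psi(l(M))}$ of Theorem~\ref{main theorem: homological formula for l classes}. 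The class $G$ then collects everything left over after this substitution. That leftover is the $\eta$-independent part, built from the odd Adams twists of $L(M)$ and the Euler factor, and it is independent of $n$ by construction.

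\textbf{Main obstacle.} The hard step is the precise computation of $\eta\cap\psi^r_*(a)$ and the resulting generating-function bookkeeping. The goal is to see that the $n$-dependence is exactly captured by $Q_s(\eta)^{n+1}$, with the exponent $n+1$ coming from Lagrange inversion. I would first check the identity in the degree-$0$ and top-degree components, which should reduce to Hirzebruch's formula~\eqref{eq: gen series for signature}, and then handle the general case by writing all classes through primitives and verifying that both sides are exponentials of the same primitive series.
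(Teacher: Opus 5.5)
\textbf{There is a genuine gap: the proposal never produces the class $G$.}

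The reduction to finding $G$ with $Q_s(\eta)^{n+1}G\cap (j_n)_*[M(n)]=(j_n)_*l(M(n))$ for all $n$ is fine. What that reduction actually uses is injectivity of $(j_n)_*$ (Lemma~\ref{lemma: j_n injectivity on homology}), not your Poincar\'e duality remark.

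The problem is the sentence ``$G$ collects everything left over \dots\ independent of $n$ by construction''. It assumes exactly what must be proved: that a series of \emph{homology} classes has the form $\sum_n (G\cap\alpha^n/n!)\,x^n$ for a single \emph{cohomology} class $G$. Your tools cannot deliver this. Indeed, $D=\eta\cap(-)$ is a derivation with $D\alpha=1$ and, by degree, $Du_i=0$ for $i<b$, so $D=\partial/\partial\alpha$. Hence power series in $\eta$ capped against $(j_n)_*[M(n)]$ only ever produce polynomials in $\alpha$. The $u_1,\dots,u_{b-1}$-dependence of $(j_n)_*l(M(n))$ must therefore come from the $p_1,\dots,p_{b-1}$-part of $G$, and you say nothing about how such classes cap. (Note also that $G$ is not $\eta$-independent.)

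Step~1 is also wrong as stated. By Lemma~\ref{lem: [X(n)] in alpha}, $\sum_n (j_n)_*[M(n)]t^n=e^{t\alpha}$, because only the identity permutation contributes. Adams twists enter only through the fixed-point contributions of nontrivial cycles in the $l$-class computation.

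\textbf{How the paper closes this.} The paper avoids constructing $G$ directly, and your own observations almost give its argument. $D$ kills the degree-zero Euler factor, and $D\mu_r(l(M))=r^{-s}(\Delta_r)_*(rf_0\cap[M])=r^{1-s}$. Hence $D\E=g_s(t)\,\E$, and so $f_s(\eta)\cap\E=t\,\E$, i.e.\ $f_s(\eta)\cap(j_{n+1})_*l(M(n+1))=(j_n)_*l(M(n))$. Its top-degree part is $j_{n+1}^*\eta\cap[M(n+1)]=(j_{n,n+1})_*[M(n)]$. Capping and injectivity then give Theorem~B$'$: $j_{n,n+1}^*L(M(n+1))=L(M(n))\,j_n^*Q_s(\eta)$. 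Consequently the classes $L(M(n))\,j_n^*Q_s(\eta)^{-(n+1)}$ are compatible under the surjective restrictions, and they define $G$.

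\textbf{What your Lagrange-inversion route would need.} That route is what the paper uses for Theorem~\ref{thmC}, \emph{after} $G$ is known to exist. To turn it into a proof of Theorem~\ref{thmB}, you would need three further ingredients:
\begin{itemize}
\item Lemma~\ref{lemma: Psi cap e^xa}, which computes the cap products $\Phi(F)\cap\alpha^n/n!$ via the additive basis and the base change $\Phi$;
\item the $\Q[[\eta]]$-linearity of $\Phi$;
\item Proposition~\ref{prop: mu(l(X)) with alpha term separated}, to check that $f_s'(x)e^{-x\alpha}\E(f_s(x))$ lies in the image of the substitution $p_0\mapsto x$, $p_i\mapsto xu_i$.
\end{itemize}
Only then can $F$ and $G=\Phi(F)$ be defined and the residue computation be run backwards.
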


Finally, we will derive an explicit closed expression for the factor~$G$, considerably simpler than the one presented 
in~\cite{ZagierThesis}. To formulate it, we use the structure of the cohomology of the
infinite symmetric product, established in Section~\ref{Sect2}. Namely, the algebra $H^*(M(\infty);\Q)$
is free graded-commutative on primitive classes $p_0 = \eta,\, p_1, \ldots, p_{b-1}$ 
which are degree-preserving images of an additive basis for $H_{>0}(M)$, and has a canonical
degree-preserving automorphism~$\Phi$ which is the base-change map relating the monomials in the
classes $p_i$ to an additive basis given by symmetrizations (see Section~\ref{Sect2} for details).
\begin{introthm}  \label{thmC}
The factor $G \in H^*(M(\infty))$ of Theorem~\ref{thmB} is given by
\[
    G \,=\, \frac{f'_s(\eta)}{(1-f_s(\eta)^2)^{e(M)/2}} \; \Phi(e^H)
\]
with $H \in \Q[[\eta]][p_1, \ldots, p_{b-1}]$ defined as
 \begin{equation}
        H=\sum_{i=0}^{s} \sum_{k=0}^{2i} \frac{1}{k!}\Big(\frac{f_s(\eta)}{\eta} \Big)^k  g^{(k)}_{i+1}(f_s(\eta)) \, \varepsilon^k(l_i),
     \end{equation}
     where $g_{i+1}^{(k)}$ denotes the $k$-th derivative of the odd polylogarithm $g_{i+1}$, $l_i \in H_{2i}(M)$ is the degree~$2i$ 
component of the homological $l$-class of $M$, and the map $\varepsilon^k\thin:\thin H_*(M)\to H^*(\Minfty)$ 
encodes the evaluations of $k$-fold products of basis elements and is introduced in Section~\ref{Sect6}.

\end{introthm}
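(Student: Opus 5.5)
Starting from Theorem~\ref{main theorem: homological formula for l classes}, I will convert the homological generating series into cohomology. The basic tool is the duality between the Pontryagin algebra $H_*(\Minfty)$ and the cohomology algebra $H^*(\Minfty)$. For a class $x\in H^*(\Minfty)$, Poincar\'e duality on $M(n)$ gives
\[
\langle j_n^*(x)\,L(M(n)),[M(n)]\rangle \= \langle x,(j_n)_*l(M(n))\rangle .
\]
So it suffices to find a class $\mathcal{G}_n$ with $j_n^*\mathcal{G}_n = L(M(n))$ by testing against all $x$. I first need to understand how the fundamental classes $[M(n)]$, pushed into $H_*(\Minfty)$, are encoded. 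In Section~\ref{Sect2} language, $(j_n)_*[M(n)]$ should be the degree-$2sn$ part of $\exp$ of the primitive generators, i.e.\ the coefficient of $t^n$ in $e^{t[M]+\cdots}$. Cap product with a cohomology class on $\Minfty$ should act as a derivation-like operator (contraction) on the Pontryagin algebra. The identity $\cap\,\eta$ acting as differentiation in the $[M]$-direction is what produces a residue/Lagrange inversion.

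The key steps, in order, are as follows.

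\textbf{Step (i).} Write $L(M(n))=j_n^*(\mathcal{G}_n)$ and express, via the pairing above, that $\mathcal{G}_n$ is characterised by $\mathcal{G}_n\cap[M(n)]=l(M(n))$ in $H_*(\Minfty)$ restricted to the image of $M(n)$. Since $H_*(\Minfty)$ is a free graded-commutative algebra dual to the primitives $p_i$, capping $e^{t\,u}$ with a monomial in the $p_i$ can be computed by differentiating with respect to the dual variables.

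\textbf{Step (ii).} Use Theorem~\ref{main theorem: homological formula for l classes}: the series $e^{\Psi(l(M))}$ is an exponential whose exponent involves $\sum_{r\text{ odd}} t^r r^{-i}\,\psi^r(l_i)$-type terms. Pairing with $e^{\eta x}$-type test classes turns $\sum_r t^r/r^{s}$ into $g_s(t)$. Comparing with the expansion of $[M(n)]$ as a coefficient of $e^{t[M]}$, the change of variables $t\mapsto f_s(\eta)$ appears. Extracting the $t^n$ coefficient by a residue then gives the factors $Q_s(\eta)^{n+1}$ and $f_s'(\eta)$ via the Lagrange inversion formula, which proves Theorem~\ref{thmB}. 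The factor $(1-t^2)^{-e(M)/2}$ becomes $(1-f_s(\eta)^2)^{-e(M)/2}$.

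\textbf{Step (iii).} For the remaining exponent, I evaluate $\Psi(l_i)$ after substitution. Components of $l_i$ involving $k$-fold products of basis classes contribute Taylor terms $\frac{1}{k!}(f_s(\eta)/\eta)^k g_{i+1}^{(k)}(f_s(\eta))\,\varepsilon^k(l_i)$. Here the index shift $s\to i+1$ comes from the degree of $l_i$ relative to the Adams twist weight $r^{-(i+1)}$. Finally, the conversion from monomial basis to symmetrised basis yields the automorphism $\Phi$ applied to $e^H$.

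I expect the main obstacle to be Step (iii): keeping track of how $\Psi$ interacts with products of basis elements. That is, I need to show that the cap product of a $k$-fold product class with the Adams-twisted terms produces exactly $g^{(k)}_{i+1}$ with the $(f_s/\eta)^k$ normalisation, and that the basis change is precisely $\Phi$. I would first verify the case $M=S^2$, and the degree-$0$ case against \eqref{eq: gen series for signature}, to fix the normalisations.
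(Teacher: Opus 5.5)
Your overall plan (substitute $t=f_s(x)$ and use Lagrange inversion so that all $n$-dependence sits in $Q_s(\eta)^{n+1}$, producing $f_s'(\eta)$ and $(1-f_s(\eta)^2)^{-e(M)/2}$) is the same as the paper's. Proving Theorem~\ref{thmB} and Theorem C together via injectivity of $(j_n)_*$ would also be acceptable. The gaps are in the two places where the content of Theorem C actually lies.

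\textbf{Step (i) is false as stated.} Capping with the primitive class $p_i$ is a derivation of the Pontryagin algebra. It is not differentiation in a ``dual variable'', because $p_i\cap u_j=(j_1)_*\bigl((f_i\cup f_j)\cap[M]\bigr)$ is in general nonzero. For $M=\mathbb{CP}^2$ with $f_1=h$, you get $p_1\cap u_1=1$ and hence $p_1^2\cap e^{x\alpha}=(x+x^2u_1^2)\,e^{x\alpha}$; indeed $p_1^2=\eta+\Phi(p_1^2)$. The clean formula holds for the symmetrized classes, not the monomials:
$\nf\cap\frac{\alpha^n}{n!}=\frac{\alpha^{n-|\mathbf n|}}{(n-|\mathbf n|)!}\,u_1^{n_1}\cdots u_{b-1}^{n_{b-1}}$, with $|\mathbf n|=n_0+\cdots+n_{b-1}$ (and $0$ if $|\mathbf n|>n$). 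Equivalently, $\sum_n\bigl(\Phi(F)\cap\frac{\alpha^n}{n!}\bigr)x^n=e^{x\alpha}F(x,xu_1,\dots,xu_{b-1})$. This uses the exact identity $(j_n)_*[M(n)]=\alpha^n/n!$, not ``$\exp$ of the primitive generators''; $\alpha$ is in general not primitive. This lemma, together with the $\Q[[\eta]]$-linearity of $\Phi$ that lets $Q_s(\eta)^{n+1}$ pass through $\Phi$, is exactly why the answer has the shape $(\text{series in }\eta)\cdot\Phi(e^H)$. In your proposal $\Phi$ appears only as an unexplained final ``conversion'', so the shape of the formula is never derived.

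\textbf{Step (iii) is missing the identity that resolves it.} You flag this step as the main obstacle; what it needs is an explicit formula for $(\Delta_r)_*$. Write $U=\sum_i e_i\otimes u_i=1+e_b\otimes\alpha+\U$. Then $(\Delta_r)_*(y)=\langle U^r,y\rangle$ via the iterated diagonal coproduct. Since $e_b\cup e_i=0$ for $i\neq0$, you get $U^r=r\,e_b\otimes\alpha+(1+\U)^r$, and therefore $\Psi(l(M))=g_s(t)\,\alpha+\sum_i\langle g_{i+1}(t(1+\U)),l_i\rangle$. This splitting is what shows that $f_s'(x)\,e^{-x\alpha}\,\E(f_s(x))$ is a series in $x,xu_1,\dots,xu_{b-1}$: the factor $e^{g_s(f_s(x))\alpha}=e^{x\alpha}$ cancels $e^{-x\alpha}$. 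Only then can you invert the substitution $p_0\mapsto x$, $p_i\mapsto xu_i$ and identify $F$ at all. The $\varepsilon^k$-terms then come from Taylor-expanding $g_{i+1}\bigl(f_s(x)+\frac{f_s(x)}{x}\,x\U\bigr)$ at $f_s(x)$ and pulling back $x\U\mapsto\p$. Here $\varepsilon^k(l_i)=\langle\p^k,l_i\rangle$, with the reversed order of the $p$'s absorbing the Koszul signs. So the $k$-fold products are evaluations on $l_i$ of $k$-fold cup products $e_{i_1}\cdots e_{i_k}$ coming from the diagonal, not ``components of $l_i$''. Your mechanism for producing $g_s$ (capping with powers of $\eta$) sees only the top-degree term $\mu_r([M])$, so it accounts for Theorem~\ref{thmB} but not for $H$.
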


\subsection*{Earlier results.}  
Macdonald~\cite{MacCurves} gave an expression for the Chern classes, hence implicitly also for the Pontryagin classes
and the $L$-class, of the $n$-th symmetric product of a complex curve (in which case
$M(n)$ is smooth for all~$n$ and the Thom–Milnor generalization of the Hirzebruch $L$-class is not needed).
We should  also mention that equation~\eqref{lSeries} was proved in~\cite{cappell2017characteristic}
(Theorem 1.5) in the special case when $M$ is a smooth complex projective variety (the authors also treat 
arbitrary complex quasi-projective varieties, in which case their result is not a special case of 
Theorem~\ref{main theorem: homological formula for l classes}), but their proof uses
the machinery of algebraic geometry and hence does not apply to the differentiable case.
However, we only learned about this while writing this paper, which is based on a preliminary version written many
years earlier by the second author.

\subsection*{Overview.} The paper is organized as follows. Section~\ref{Sect1} collects preliminaries on $L$-classes 
of rational homology manifolds as well as equivariant $L$-classes. Section~\ref{Sect2} establishes the properties and
the Hopf algebra structure of the (co)homology of the infinite symmetric product. Sections~\ref{Sect3} and~\ref{Sect4}
contain the proof of Theorem~\ref{main theorem: homological formula for l classes}. Section~\ref{Sect5} gives the
new proof of the cohomological formula for~$L(M(n))$, and Section~\ref{Sect6} the closed formula for the factor~$G$.
\begin{conv} 
The manifolds considered in this paper will always be connected compact oriented smooth unless stated otherwise. 
All (co)homology groups are taken with coefficients in a field of characteristic zero (mostly $\Q$ or $\C$). We mostly
omit the coefficients and coefficient homomorphisms, so that, for example, we will multiply classes in $H^*(X; \Q)$ 
by elements of $H^*(X; \C)$ without explicit comment. Throughout the paper, the lower-case $l$ will denote the 
homological $l$-class, while the capital~$L$ is reserved for the cohomological Hirzebruch $L$-class.
\end{conv}

\subsection*{Acknowledgements.}
The first author acknowledges the support of the CDP C2EMPI, as well as the French State under the France-2030 
programme, the University of Lille, the Initiative of Excellence of the University of Lille, the European Metropolis 
of Lille for their funding and support of the R-CDP-24-004-C2EMPI project.
\section{Preliminaries} \label{Sect1}

\subsection*{The signature theorem and the $L$-class}
Recall that the signature $\Sign(X)$ of a closed oriented smooth manifold $X$ is defined as the signature of the
 intersection form  on~$H^{2k}(X)$ if $\dim X=4k$ and zero otherwise. The celebrated result of 
Hirzebruch~\cite{hirzebruch1966topological} states that the signature can be expressed as a linear combination
of Pontryagin numbers via the $L$-class. The Hirzebruch $L$-class
\[
L(X)\, =\, L(TX)\, =\; \prod_{j} \frac{x_j}{\tanh(x_j)}
\]
is the multiplicative sequence in the Pontryagin classes of $TX$ associated to the power series $x/\tanh(x)$. 
Here, $x_j^2$ are the formal  Pontryagin roots defined by $p(TX)=\prod_j (1+x_j^2)$. We refer the reader 
to~\cite{hirzebruch1966topological} and~\cite{milnor1974characteristic}  for a more detailed exposition. 
The signature theorem states that
\[
\Sign(X) \, =\, \langle L(X), [X] \rangle. 
\]
For any $4d$-dimensional submanifold $A \subset X$ with trivial normal bundle the $L$-class of $A$ is the
 restriction $i^* L(X)$ (where $i$ denotes the inclusion) and hence 
\[
\langle L(X), i_*[A] \rangle \, =\, \Sign(A).
\]
The observation of Milnor~\cite{milnor1974characteristic}, extending earlier work of Thom~\cite{thom1958},
is that for a rational homology manifold $Y$ one
can define an analogue of a ``submanifold with trivial normal bundle'' (basically the inverse image of a generic
point of a map from $Y$ to a sphere) and by the work of Serre~\cite{serre1953groupes} there are enough of 
these special subspaces to represent all of $H_*(Y; \Q)$.  A \textit{rational homology manifold} of dimension~$n$
 is a locally finite simplicial complex~$Y$ such that for all $y \in Y$
\[
H_i(Y, Y-{y}; \Q) \cong H_i(\R^n, \R^n-\{0\}; \Q) \cong \begin{cases} \Q &\text{if~} i=n,\\ 0 &\text{if~} i \neq n. \end{cases}
\]
Such a compact oriented space~$Y$ has a fundamental class $[Y]$ and satisfies Poincar{\'e} duality over~$\Q$.
To define the~$L$-class for a rational homology manifold~$Y$, we use Serre's theorem~\cite{serre1953groupes}, 
which states that for $n=\dim Y \leq 2i-2$ there is a rational isomorphism of abelian groups: 
\begin{align*}
    \pi^i(Y) \coloneqq [Y, S^i] \ &\cong_{\Q} \ H^i(Y)\\
    [f] &\longmapsto f^* (u),
\end{align*}
where $u \in H^i(S^i)$ is the standard generator. Given a simplicial map $f \colon Y \to S^i$, the preimage $f^{-1}(p)$
of almost every point is again an oriented rational homology manifold, whose signature only depends on~$[f]$. This signature
vanishes unless the codimension~$n-i$ is equal to~$4k$ for some~$k$, and in that case defines a homomorphism 
$\pi^{n-4k}(Y) \to \Z$ which, combined with Serre's theorem, yields a unique class $L_k \in H^{4k}(Y)$ such that
\[
\langle L_k \cup f^*(u), [Y] \rangle \ = \ \Sign(f^{-1}(p))\thin.
\]
The restriction on $i$ can be removed by applying the above procedure to $Y \times S^N$ for large~$N$ instead. 
The $L$-class is then defined as the sum of these classes:
\[
L(Y) \, \coloneqq \, \sum_k L_k\ \in \ H^{4*}(Y; \Q)\thin.
\]
For a smooth manifold $Y$  it agrees with the Hirzebruch $L$-class. This definition can be stated more elegantly for 
the homological $l$-class $l(Y)=L(Y)\cap[Y]\thin$: 
\[
\langle f^*(u),  l(Y) \rangle \ = \ \Sign(f^{-1}(p)),
\]
so the homological $l$-class is actually the primary object to consider.
\subsection*{Duality and Umkehr map}
Recall from the conventions that all (co)homology groups have rational coefficients unless stated otherwise. For a 
compact oriented rational homology manifold~$Y$ of dimension $n$ the Poincar{\'e} duality isomorphism is given 
by capping with the fundamental class~$[Y]$:
\[
\mathrm{PD}_{Y}=\ -\cap[Y] \ \colon \ H^{k}(Y) \xrightarrow{\cong} H_{n-k}(Y).
\]
For an inclusion~$i \colon F \to Y$ of compact oriented rational homology manifolds of codimension~$d$ the Umkehr (
or Gysin) homomorphism $i_!$ is defined via the following commutative diagram:
  \[
     \begin{tikzcd}
H^{a-d}(F) \arrow[r, dashed, "i_{!}"] 
\arrow[d, "{\scriptstyle \cong}", "\mathrm{PD}_{F}"'] & H^{a}(Y) 
\arrow[d,"\mathrm{PD}_{Y}", "{\scriptstyle \cong}"']\\
H_{n-a}(F) \arrow{r}{i_*}  & H_{n-a}(Y)
	\end{tikzcd}
   \]
for any $a$ between 0 and~$n$. In other words, it is the composite homomorphism
 \[
 i_! \coloneqq  PD^{-1}_Y\circ i_* \circ PD_F \ \colon \ H^{a-d}(F) \to H^{a}(Y).
 \]
\subsection*{Equivariant $L$-class in cohomology and in homology} Let $X$ be a closed oriented smooth manifold with
an action of a finite group $G$ by orientation-preserving diffeomorphisms. Let $g \in G$ and suppose that the fixed-point
set $X^g$ is an oriented submanifold (which will be true in our case). The normal bundle $N^g$ of $X^g$ inherits a 
faithful action of $g$. At each point $x \in X$ the fiber $N^g_x$ can be decomposed as a sum of one-dimensional subspaces,
where $g$ acts as a reflection, and two-dimensional subspaces, where $g$ acts by rotation through the angle $\theta$. 
Rotation by $\theta$ and by $-\theta$ are equivalent, so we can assume $0 < \theta < \pi$. Such a decomposition of a fiber
extends to a splitting of the whole bundle $N^g$ over each component $X^g$:
\[
N^g \= N_{\pi}^g \oplus \bigoplus_{0 < \theta < \pi} N_{\theta}^g,
\] 
where~$N_{\pi}^g$ is a real bundle on which $g$ acts as~$-1$ and~$N_{\theta}^g$ is a complex bundle on which $g$ acts as~$e^{i\theta}$.

Let $\xi$ be a $U(q)$-bundle over a space $Y$ and $x_j$ its Chern roots, defined by the formal $q$-fold product $c(\xi)=\prod_j (1+x_j)$. Following~\cite{zagier1972}, for $\theta \in \R \smallsetminus 2\pi\Z$ set
 \begin{equation}
      L_{\theta}(\xi) \, \coloneqq \, \prod_j \coth\Big(x_j+\frac{i\theta}{2}\Big) \ \in \ H^*(Y;\Q(e^{i\theta})),
  \end{equation} 
  where, as before, this means expanding the symmetric expression as a power series in the $x_j$'s and substituting the
elementary symmetric polynomials with the corresponding Chern classes $c_k(\xi)$. Note that $L_{\theta}$ is not stable
as its degree-zero term is~$\coth(i\theta/2)^q$.
Similarly, for an $O(q)$-bundle~$\xi$ over~$Y$ set
 \begin{equation}
  L_{\pi} (\xi) \,  \coloneqq \,  e(\xi)L(\xi)^{-1} \in H^*(Y; \Q),
 \end{equation} 
 where $e(\xi) \in H^q(Y; \Q)$ is the Euler class and $L(\xi)$ is the Hirzebruch $L$-class of $\xi$.
 The class $L'(g, X)$ is now defined as the product of three components:
    \begin{equation} \label{eq: three factors of L'}
    L'(g, M)=L(X^g) \ L_{\pi}(N^g_{\pi}) \ \prod_{0<\theta<\pi} L_{\theta}(N^g_{\theta}) \ \in \ H^*(X^g; \C).
    \end{equation}  
Applying the Umkehr map for the inclusion $i^g \colon X^g \to X$ we obtain the $g$-th equivariant~$L$-class:
\begin{equation} \label{eq: equivariant L-class def}
    L(g, X) \ \coloneqq \ i^g_{\thin!} \thin L'(g, X) \ \in H^*(X; \C).
\end{equation}
One of the main results in~\cite{zagier1972}, used as the crucial tool to compute the~$L$-classes of symmetric products, 
is the following averaging formula.
\begin{thm*}
     Given a manifold $X$ with an action of a finite group $G$ as above, the $L$-class of the rational homology manifold~$X/G$
is the average of the equivariant $L$-classes of $X$: 
  \begin{equation} \label{eq: averaging formula for L(g, M)}
  \frac{1}{\deg \pi} \ \pi^* L(X/G) \ =\ \frac{1}{|G|} \; \sum_{g \in G} L(g, X),
   \end{equation}
where $\pi \colon X \to X/G$ is the projection map.
\end{thm*}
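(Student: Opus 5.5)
The plan is to test both sides of \eqref{eq: averaging formula for L(g, M)} against enough cohomology classes, using Milnor's definition of $L(X/G)$ through maps to spheres and the $G$-signature theorem of Atiyah--Singer for the smooth $G$-manifolds that arise upstairs.

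\emph{Step 1: reduction to invariant classes.} Conjugation by $h\in G$ carries $X^g$ to $X^{hgh^{-1}}$ together with its normal decomposition, and $h$ preserves orientation. Hence $h^*L(g,X)=L(h^{-1}gh,X)$, so the right-hand side of \eqref{eq: averaging formula for L(g, M)} is $G$-invariant. The left-hand side is invariant trivially. Rationally (or over $\C$) we have $H^*(X)^G=\pi^*H^*(X/G)$, and cup product followed by evaluation on $[X]$ restricts to a nondegenerate pairing on invariants. It therefore suffices to show that both sides have the same pairing with every class $\pi^*f^*(u)$, where $f\colon X/G\to S^i$. By Serre's theorem (after replacing $X$ by $X\times S^N$ with trivial $G$-action on $S^N$, which changes nothing) such classes $f^*(u)$ span $H^*(X/G)$.

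\emph{Step 2: computing both pairings.} Fix $f$ and a generic point $p$, and set $F=f^{-1}(p)\subset X/G$ and $\tilde F=\pi^{-1}(F)=(f\pi)^{-1}(p)$. Since $\pi_*[X]=\deg\pi\,[X/G]$, the left-hand side gives
\[
\frac1{\deg\pi}\langle \pi^*L(X/G)\cup\pi^*f^*u,[X]\rangle=\langle L(X/G)\cup f^*u,[X/G]\rangle=\Sign(F).
\]
Now $F=\tilde F/G$, and the signature of a quotient equals the signature of the form restricted to $G$-invariant classes. Thus
\[
\Sign(F)=\frac1{|G|}\sum_g \Sign(g,\tilde F).
\]
By the $G$-signature theorem, $\Sign(g,\tilde F)=\langle L'(g,\tilde F),[\tilde F^g]\rangle$.

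The normal bundle of $\tilde F$ in $X$ is pulled back from $T_pS^i$, so it is trivial with trivial $G$-action. Hence $\tilde F^g=\tilde F\cap X^g$ is a transverse intersection, the $g$-normal decomposition of $\tilde F^g$ in $\tilde F$ is the restriction of that of $X^g$ in $X$, and $L(\tilde F^g)$ is the restriction of $L(X^g)$. Therefore $L'(g,\tilde F)=L'(g,X)|_{\tilde F^g}$. Moreover $[\tilde F^g]$ is Poincar\'e dual in $X^g$ to $(i^g)^*\pi^*f^*u$. This gives
\[
\Sign(g,\tilde F)=\langle L'(g,X)\cup (i^g)^*\pi^*f^*u,[X^g]\rangle=\langle L(g,X)\cup\pi^*f^*u,[X]\rangle,
\]
where the last equality is the projection formula for the Umkehr map $i^g_!$. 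Averaging over $g$ matches the two sides.

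\emph{Main obstacle.} I expect the genuine difficulty to lie in the genericity in Step 2. One must arrange that $f\pi$ is smooth near $\tilde F$ and transverse, with $p$ a regular value, and simultaneously transverse on every fixed set $X^g$, so that $\tilde F$ is a closed smooth $G$-manifold and $\tilde F^g=\tilde F\cap X^g$ carries the stated normal data. I would first try to build $f$ as a simplicial map for a $G$-equivariant smooth triangulation of $X$ descended to $X/G$, and then perturb $f\pi$ equivariantly near $\pi^{-1}(p)$ to a smooth $G$-invariant map that is transverse on each stratum $X^g$. Since the preimage is only used up to $G$-bordism, this perturbation does not change $\Sign(F)$.

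Two further points need checking. First, orientations of $\tilde F$ and of the $\tilde F^g$ must be compatible with those used in $i^g_!$. Second, because $L_\theta$ takes complex values, the whole argument has to be run with $\C$-coefficients.
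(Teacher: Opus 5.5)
The paper does not prove this statement. It quotes it from \cite{zagier1972} and uses it as a black box, so there is no internal argument to compare yours with. Your outline is the classical route to this formula, and as far as I can tell essentially the argument behind the cited result: test both sides against the invariant classes $\pi^*f^*(u)$, read the left side as $\Sign(f^{-1}(p))$ via Milnor's definition, write $f^{-1}(p)=\tilde F/G$ so that $\Sign(\tilde F/G)=\frac{1}{|G|}\sum_g \Sign(g,\tilde F)$, and evaluate each term by the Atiyah--Singer $G$-signature theorem. The outline is correct.

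Some remarks on the points you flag.

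\emph{Transversality.} Transversality of $\tilde F$ to the fixed sets needs no separate arrangement, and your own Step~2 already shows this. If $\tilde f$ is smooth and $G$-invariant with $p$ a regular value, then $g$ acts trivially on the normal bundle of $\tilde F$. Equivalently, the $\langle g\rangle$-average of any preimage vector lies in $(T_xX)^g=T_xX^g$ and has the same image under $d\tilde f_x$. So $\tilde F\cap X^g$ is automatically transverse.

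\emph{Genericity.} The genuine genericity issue is that an invariant map can have $p$ as a regular value at all. Since $d\tilde f_x$ factors through $(T_xX)^{G_x}$, $\tilde F$ must miss fixed strata of dimension $<i$. This is arranged by perturbing stratum by stratum: in a slice at $x$, any function of the $(T_xX)^{G_x}$-coordinates is $G_x$-invariant.

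\emph{Passage from simplicial to smooth preimages.} Replacing Milnor's simplicial preimage by a smooth invariant one is, as you say, a $G$-bordism argument. It works because each $\Sign(g,\cdot)$ vanishes on boundaries of compact oriented $G$-rational homology manifolds.

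\emph{The case $\dim\tilde F\equiv 2 \pmod 4$.} Here the individual $\Sign(g,\tilde F)$ can be nonzero; they are purely imaginary. Correspondingly, $L(g,X)$ can have components in degrees $\equiv 2\pmod 4$. Your averaging step still holds with Atiyah--Singer's convention for $\Sign(g,\cdot)$ in these dimensions. The average is the signature of the invariant part, which vanishes, consistent with $L(X/G)$ living in degrees divisible by~$4$.
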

Following our philosophy that homological $l$-classes are the primary objects to study, let us define the homological counterparts of the $L'$-classes:
\[
l'(g, X) \coloneqq L'(g, X) \cap [X^g] \in H_*(X^g; \C)\thin,
\]
so that in view of the definition of $i^g_{\thin!}$ the equivariant $L$-class defined by~\eqref{eq: equivariant L-class def} is simply
the Poincar\'e dual of $i^g_{\thin*}\l'(g,X)$.
 For an intrinsic treatment of equivariant homological $l$-classes for singular spaces we refer the reader to~\cite[Section 9]{banagl2026equivariant}. Note the difference in notation: what we call $l'(g, X)$ is denoted by $L_*(g, X)$ in~\cite{banagl2026equivariant}.

\section{Homology and cohomology of infinite symmetric products} \label{Sect2}

\subsection*{Symmetric products} Let $M$ be a compact oriented smooth manifold of even dimension $2s$. In this case, the action of the symmetric group $S_n$ on $M^n$ by permutation of the coordinates is orientation-preserving: the interchange map $T \colon M \times M \to M \times M$ preserves the orientation as there is no extra sign coming from the graded-commutativity when the dimension of $M$ is even. We define the \textit{$n$-th symmetric product} of $M$ as the quotient space $M(n)=M^n/S_n$, also denoted in the literature by $M[n]$ or $\mathrm{SP}^n M$, and denote the projection map $M^n \to M(n)$ by $q_n$. Since $M(n)$ is the quotient of an oriented smooth manifold by a finite orientation-preserving action it is an oriented rational homology manifold. An element of $M(n)$ is given by an unordered $n$-tuple (a~multiset), which we still write as $(x_1, \ldots, x_n)$ when no ambiguity arises. If we choose a base-point $* \in M$, then there is a natural inclusion
\begin{align*}
    j_{n, n+1} \colon M(n) &\longrightarrow M(n+1)\\
   (x_1, \ldots, x_n)  &\mapsto (x_1, \ldots, x_n, *).
\end{align*}
The \textit{infinite symmetric product} is the colimit of the direct system of maps induced by the~$j_{n, n+1}$:
\[
\Minfty \ \coloneqq\ \lim_{\longrightarrow} \, M(n).
\]
We denote by $j_n \colon M(n) \to \Minfty$ the inclusion defined by $j_n(x_1, \ldots, x_n)=(x_1, \ldots, x_n, *,*, \ldots)$.

The map given by the union of multisets 
 \[
m_{n,n'} \colon M(n) \times M(n') \to M(n+n'), ~~\big( (x_1, \dots, x_n),(y_1, \ldots, y_{n'}) \big) \mapsto (x_1, \ldots, x_n, y_1, \ldots, y_{n'})
 \]
 induces a multiplication map on $\Minfty$, which we denote by the same letter $m$:
   \begin{equation} \label{multiplication map on Xinfty}
     m \colon \Minfty \times \Minfty \to \Minfty.
   \end{equation}
This turns $\Minfty$ into an abelian topological monoid with identity $(*, *, \ldots)$. In particular, $\Minfty$ is an $H$-space, which naturally equips its homology and cohomology with a Hopf algebra structure. We discuss this next.

Over a field, in our case $\Q$, the homological cross-product 
\[
\times \colon \bigoplus_{p+q=k} H_p(A) \otimes H_q(B) \xrightarrow{\cong} H_{k}(A \times B)
\]
is an isomorphism, by the Künneth theorem. 
 \begin{definition}
On homology, there is an associated \textit{Pontryagin product} given by the composite
    \[
H_p(\Minfty) \otimes H_q(\Minfty) \xrightarrow{\times} H_{p+q}(\Minfty \times \Minfty) \xrightarrow{m_*} H_{p+q}(\Minfty). 
    \]
For the Pontryagin product of two homology classes $x, y \in H_*(\Minfty)$ we will simply write $xy$, meaning    
  \[
 xy \, \coloneqq \,  m_*(x \times y).
  \]
    \end{definition}
Let $\delta \colon \Minfty \to \Minfty \times \Minfty$ denote the diagonal map. Composing $\delta_*$ with the Künneth isomorphism endows $H_*(\Minfty)$ with a coproduct, which together with the Pontryagin product makes $H_*(\Minfty)$ a connected, graded-commutative and cocommutative Hopf algebra over~$\Q$. Since $H_*(\Minfty)$ is of finite type, the dual $H^*(\Minfty)$ is also a connected graded-commutative and cocommutative Hopf algebra of finite type, with product the cup product and coproduct induced by $m^*$.

\subsection*{Adams operations.} Since $\Minfty$ is a topological monoid, we have the $r$-th power map:
\begin{equation} \label{eq:Adams operation on Hopf algebras}
    \phi_r \; \colon  \; \Minfty \to \Minfty, ~~~x \mapsto m^{(r)} (\underbrace {x,x, \ldots, x}_r),
\end{equation}
where $m^{(r)}$ is the $r$-th iterate of the multiplication map~\eqref{multiplication map on Xinfty}. The induced map $(\phi_r)_* \; \colon \; H_*(\Minfty) \to H_*(\Minfty)$ can be written as $(\phi_r)_*=m^{(r)}_* \circ \delta^{(r)}_*$, which is the \textit{classical Adams operation} (also called Sweedler powers or the characteristic operation) on the Hopf algebra $H_*(\Minfty)$ (see e.g.~\cite{cartier2007primer}).

Let
\[
\dr \, \colon \, M \longrightarrow \Minfty
\]
be the $r$-fold diagonal map given by
\[
x \longmapsto
( \underbrace {x, \ldots, x}_{r}, *, *, \ldots ).
\]
Equivalently, $\dr$ is the composition
\[
M
\xrightarrow{\ \operatorname{diag_r}\ }
M^r
\xrightarrow{\ q_r \ }
M(r)
\xhookrightarrow{\ j_r \ }
\Minfty .
\]
The $r$-th \textit{homological Adams operation}   (note that it is different from the operation on Hopf algebras above) $\psi_r$ is defined as the rescaling of the even-dimensional homogeneous components according to their degree (see~\cite{adams1963vector}):
\[
\psi_r : H_{2i}(\Xinfty) \xrightarrow{\cong} 
H_{2i}(\Xinfty), ~~~x \mapsto r^{-i} \, x.
\]
We then define
\[
\mr \, \colon \, H_{\mathrm{ev}}(M)
\longrightarrow
H_{\mathrm{ev}}(\Minfty)
\]
by first applying the push-forward induced by $\dr$ followed by the $r$-th homological Adams operation:
\begin{equation} \label{eq: mu_r definition}
	\mr \colon
	H_{\mathrm{ev}}(M)
	\xrightarrow{(\dr)*}
	H_{\mathrm{ev}}(\Minfty)
	\xrightarrow{\psi_r}
	H_{\mathrm{ev}}(\Minfty).
\end{equation}
We observe that $(\Delta_r)_*$ factors through the $r$-power map ($r$-th Hopf algebra Adams operation) 
from~\eqref{eq:Adams operation on Hopf algebras}: $(\Delta_r)_*\, =\, (\phi_r)_* \circ (j_1)_*$. Let us denote
by $\Psi_r$ the composition of the two Adams operations we have defined, one by taking powers (Hopf algebraic) and the other one homological (rescaling):
\[
\Psi_r \; \coloneqq \; \psi_r \circ (\phi_r)_* \colon H_{\ev}(\Minfty) \to H_{\ev}(\Minfty).
\]
We then define
\[
\Psi \colon H_{\mathrm{ev}}(M)
\longrightarrow
H_{\mathrm{ev}}(\Xinfty)\otimes \mathbb{Q}[[t]]
\]
by
\be \label{PsiSer}
\Psi(x)
\; \coloneqq \;
\sum_{\substack{r\geq 1\\ r  \mathrm{~odd}}}
\mr(x) \cdot \frac{t^r}{r} 
\; =\; \sum_{\substack{r\geq 1\\ r  \mathrm{~odd}}}
\psi_r  \big((\Delta_r)_*(x)\big) \cdot \frac{t^r}{r} \; =\;  \sum_{\substack{r\geq 1\\ r  \mathrm{~odd}}}
\Psi_r  \big( (j_1)_*(x) \big) \cdot \frac{t^r}{r}.
\ee
In particular, $\Psi$ is a degree-preserving map, where the formal variable $t$ is assigned degree zero, and it takes 
values in odd power series in $t$. This will be the universal series whose exponential computes the generating series 
of $l$-classes in Theorem~\ref{main theorem: homological formula for l classes}.
  \subsection*{Algebra structure on $H_*(\Minfty)$} Let $\{e_0, e_1, \ldots, e_b \}$ be a homogeneous additive basis 
of $H^*(M)$ with $e_i$ of degree $d_i$. We normalize so that $e_0=1$ is the unit class in degree $0$ and $e_b$ is the 
top dimensional class dual to $[M] \in H_{2s}(M)$, i.e. $\langle e_b, [M]\rangle =1$. Dually, in homology, $\{e^*_0, e^*_1, \ldots, e^*_b \}$
is an additive basis of $H_*(M)$ with $e^*_0=[*]$, the class of the chosen base point, and $e^*_b=[M]$. Set 
  \begin{equation} \label{eq: def of u_i}
      u_i \, \coloneqq \, (j_1)_*(e^*_i) \ \in \ H_{d_i}(\Minfty),
  \end{equation}
  where $j_1 \colon M=M(1) \to \Minfty$ is the inclusion. The top-dimensional class will be of particular importance 
and we give it a special name:
 \begin{equation}
 \alpha \, \coloneqq u_b \, = \, (j_1)_*[M] \ \in \ H_{2s}(\Minfty).
 \end{equation}
\begin{theorem} \label{thm: free hopf algebra}
The algebra $H_*(\Minfty)$ is free graded-commutative on $u_1, \ldots, u_b$: 
\[
H_*(\Minfty) \; \cong \; \Q\big[u_i \mid  i>0,\;d_i \textup{ even}\big] \otimes \Lambda \big(u_i \; |\; d_i \textup{ odd}\big).
\]

\end{theorem}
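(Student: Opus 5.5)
The plan is to compute $H_*(M(n))$ for each finite $n$ by transfer, and then to identify the colimit with the free graded-commutative algebra by a dimension count. Over $\Q$ the transfer for the finite group $S_n$ gives
\[
H_*(M(n)) \cong H_*(M^n)_{S_n} \cong \big(H_*(M)^{\otimes n}\big)_{S_n}.
\]
Here the permutation action on the tensor power carries the Koszul signs coming from the Künneth isomorphism. This is the $n$-th graded-symmetric power $\Sym^n(V)$ of the graded vector space $V = H_*(M)$, where $V$ has basis $e_0^*=[*], e_1^*, \ldots, e_b^*$.

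Next I would check that the maps in the direct system correspond to the obvious algebraic maps. Under the isomorphism above, $(j_{n,n+1})_*$ is multiplication by $e_0^*$, that is, $\Sym^n V \to \Sym^{n+1}V$, $x\mapsto x\cdot e_0^*$. This follows because $j_{n,n+1}$ is covered by the map $M^n \to M^{n+1}$ that appends the base point, and on homology that map is $x\mapsto x\times[*]$. Since $\Sym V$ is free graded-commutative on $V$, multiplication by $e_0^*$ is injective. Its colimit is therefore
\[
\operatorname{colim}_n \Sym^n V \cong \Sym(V)/(e_0^*-1) \cong \Sym(\bar V),
\]
the free graded-commutative algebra on $\bar V=\langle e_1^*,\ldots,e_b^*\rangle$. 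Homology commutes with this colimit because $\Minfty$ is a CW colimit of the closed inclusions $j_{n,n+1}$ (choose compatible triangulations). So as graded vector spaces $H_*(\Minfty)\cong \Sym(\bar V)$, and in particular $H_*(\Minfty)$ has finite type.

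It remains to match the Pontryagin product. The multiplication $m_{n,n'}$ is covered by the identity $M^n\times M^{n'}\to M^{n+n'}$, so on coinvariants it induces the product $\Sym^nV\otimes \Sym^{n'}V\to\Sym^{n+n'}V$. This needs a commutative square relating the transfers and the quotient maps $q_n$; the pushforward $(q_n)_*$ is exactly the projection onto coinvariants. Passing to the colimit, the Pontryagin product on $H_*(\Minfty)$ is the product of $\Sym(\bar V)$. The generator $e_i^*$ corresponds to $(j_1)_*e_i^*=u_i$, which gives the stated isomorphism: polynomial on the even-degree $u_i$ and exterior on the odd-degree $u_i$.

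The step I expect to need the most care is the sign bookkeeping. The $S_n$-action on $H_*(M)^{\otimes n}$ must carry the Koszul sign, and we need that $S_n$ preserves orientation so that $H_*(M(n))$ is the coinvariants (rather than a twisted version). Since we work with homology and not the fundamental class, the first point is just the graded Künneth formula. Compatibility with the maps in the colimit and with the product is otherwise routine naturality.
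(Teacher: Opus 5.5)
Your proposal is correct and follows the paper's proof essentially step for step. Transfer identifies $H_*(M(n))$ with $\Sym^n(H_*(M))$; the product is matched via $q_{n+n'}=m_{n,n'}\circ(q_n\times q_{n'})$; $(j_{n,n+1})_*$ is multiplication by $e_0^*=[*]$; and the colimit is $\Sym(\tilde H_*(M))$ with $e_i^*\mapsto u_i$. You additionally justify that homology commutes with the colimit, which the paper takes for granted.

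One small correction: orientation preservation is not needed for $H_*(M(n);\Q)\cong H_*(M^n;\Q)_{S_n}$, which holds for any finite group action. Orientation only matters for the fundamental class and Poincar\'e duality of $M(n)$.
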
 

\begin{proof}
Since we work over $\Q$ and $S_n$ is a finite group, the projection $q_n \colon M^n \to M(n)$
induces an isomorphism from the
coinvariants
\[
(q_n)_* \colon H_*(M^n)_{S_n} \xrightarrow{\; \cong \;} H_*(M(n)).
\] 
Furthermore, by the Künneth theorem we have
\[
H_*(M^n)_{S_n} \; \cong \; \big( H_*(M)^{\otimes n} \big)_{S_n} \= \Sym^n(H_*(M)).
\]
Here $\Sym^n(H_*(M))$, sometimes denoted $\Sym^n_{\gr}(H_*(M))$, is the graded-symmetric $n$th power
of the algebra $H_*(M)$, i.e., its usual $n$th symmetric power but with the obvious sign convention for products.
We endow $\Sym(H_*(M))=\bigoplus_{n} \Sym^n(H_*(M))$ with the product induced by concatenation. The identification
\be \label{EqSym}
\Sym^n(H_*(M))  \; \cong \; H_*(M(n)), ~~[x_1 \otimes\cdots \otimes x_n] \mapsto (q_n)_*\big(x_1 \times\cdots \times x_n \big)
\ee
respects the corresponding products, since $q_{n+k}=m_{n,k} \circ (q_n \times q_k)$, where $m$ is the multiplication defined in \eqref{multiplication map on Xinfty}. The inclusion map $j_{n, n+1} \, \colon \, M(n) \to M(n+1)$ is given by adjoining a 
base point $x \mapsto (x, *)$. Hence, under~\eqref{EqSym},  the induced map $(j_{n, n+1})_*$ is multiplication by $e^*_0=[*]$. 
In the colimit the element~$e^*_0$ becomes the multiplicative unit and therefore
\[
H_*(\Minfty) \; =\;  \varinjlim  H_*(M(n)) \; \cong  \; \Sym (\tilde{H}_*(M)).
\]
Finally, the basis elements $e^*_i \in \tilde{H}_*(M)$ correspond to $u_i=(j_1)_*(e^*_i)$, which finishes the proof.
\end{proof}
 
 \begin{lemma} \label{lem: [X(n)] in alpha} The pushforward of the fundamental class $[M(n)]$ along the inclusion $j_n \colon M(n) \to \Minfty$ is given by
    \[
    (j_n)_*[M(n)] \; =\; \frac{\alpha^n}{n!} \ \in \ H_*(\Minfty).
    \]
\end{lemma}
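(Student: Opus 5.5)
The plan is to compute everything in $H_*(M^n)$ and then push forward along the projection $q_n$. Since $q_n\colon M^n\to M(n)$ is a branched covering of degree $n!$ between oriented rational homology manifolds of the same dimension, it satisfies
\[
(q_n)_*[M^n] \= n!\,[M(n)].
\]
To justify this degree, restrict to the open dense subset of points with pairwise distinct coordinates. There $S_n$ acts freely and orientation-preservingly (because $\dim M=2s$ is even). Hence $q_n$ is an unbranched $n!$-sheeted covering there, and every sheet is oriented compatibly. Comparing local orientation classes at a generic point then gives the factor $n!$ in top homology. The only point needing care is that the fundamental class of $M(n)$ is the one induced from $M^n$. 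I would take this as the definition of the orientation on $M(n)$ used in the paper.

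Next, by the Künneth theorem, $[M^n]=[M]\times\cdots\times[M]$, which is the cross product $e_b^*\times\cdots\times e_b^*$. Under the identification~\eqref{EqSym} of Theorem~\ref{thm: free hopf algebra}, the class $(q_n)_*([M]^{\times n})$ corresponds to $[e_b^*\otimes\cdots\otimes e_b^*]\in\Sym^n(H_*(M))$. This identification is multiplicative, because $q_{n}=m\circ(q_1\times\cdots\times q_1)$ after iterating $q_{n+k}=m_{n,k}\circ(q_n\times q_k)$. It follows that
\[
(j_n)_*(q_n)_*[M^n] \= m^{(n)}_*\big((j_1)_*[M]\times\cdots\times (j_1)_*[M]\big) \= \alpha^n
\]
in the Pontryagin ring. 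I also have to check compatibility with the base-point inclusions, which says that $j_n\circ m_{1,\dots,1}=m^{(n)}\circ(j_1\times\cdots\times j_1)$ as maps $M^n\to\Minfty$. This holds because both maps send $(x_1,\dots,x_n)$ to $(x_1,\dots,x_n,*,*,\dots)$.

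Combining the two steps gives $n!\,(j_n)_*[M(n)]=\alpha^n$, and dividing by $n!$ proves the lemma. No sign arises from reordering the factors, since $\alpha$ has even degree $2s$. The main obstacle is therefore only the degree computation in the first step. The rest is bookkeeping already contained in the proof of Theorem~\ref{thm: free hopf algebra}.
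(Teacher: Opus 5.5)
Your proposal is correct and follows the paper's proof: the paper also uses $j_n\circ q_n = m^{(n)}\circ (j_1)^{\times n}$ together with $\deg q_n = n!$ to obtain $n!\,(j_n)_*[M(n)]=\alpha^n$. Your extra justification of the degree, via the free orientation-preserving action on the locus of pairwise distinct points, just spells out what the paper states in one line.
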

\begin{proof}
   Let $q_n \, \colon \, M^n \to M(n)$ be the quotient map. We have $j_n \circ q_n \; =\; m^{(n)} \circ (j_1)^{\times n}$, where $m^{(n)}$ is the iterated multiplication \eqref{multiplication map on Xinfty}. Using that $q_n$ has degree $n!$, we obtain $n! \, (j_n)_* [M(n)]  \,=\, \alpha^n.$
\end{proof}
 
 \subsection*{Additive and multiplicative bases for $H^*(\Xinfty)$} Let $f_0, \ldots, f_b$ be an additive basis of $H^*(M)$ Poincar{\'e} dual to the basis $e_0, \ldots, e_b$, meaning
 \[
 \langle e_i \cup f_j , [M]\rangle \; =\; \delta_{ij}.
 \]
 Hence each $f_i$ is homogeneous of degree $(2s-d_i)$, in particular $f_b=1$ and  $f_0$ is the cohomological fundamental 
class, with $\deg f_0=2s$. The description of an additive basis follows from the fact that the cohomology of $M(n)$ is 
identified with the invariants $H^*(M^n)^{S_n}$. We refer the reader to~\cite[Section 7]{ZagierThesis} for details. 
Write $\pi_k \colon M^n \to M$ for the projection onto the $k$-th factor. Given homogeneous elements $v_1, \ldots, v_n \in H^*(M)$,
define the \textit{symmetrization} of $v_1 \times\cdots \times v_n$ as
  \[
 \langle v_1, \ldots, v_n \rangle \ \coloneqq \ \sum_{\sigma \in S_n} \sigma^*(v_1 \times \cdots \times v_n) \ = \ \sum_{\sigma \in S_n} \pi^*_{\sigma(1)} (v_1) \ldots \pi^*_{\sigma(n)} (v_n)
  \]
and set
   \[
 \langle n_0(f_0) \ldots n_b(f_b) \rangle \ \coloneqq \ \langle \underbrace {f_0, \ldots, f_0}_{n_0 }, \underbrace{f_1, \ldots, f_1}_{n_1}, \ldots , \underbrace{f_b, \ldots, f_b}_{n_b} \rangle,
   \]
where $n_0, \ldots, n_b$ are integers satisfying $n_0+\ldots +n_b=n$ and $n_i \leq 1$ if $2s-d_i=\deg f_i$ is odd. The set 
of these elements forms an additive basis of $H^*(\Xn, \Q)$. 
Set
\[
\nf_{n} \; \coloneqq \;  [n_0(f_0) \ldots n_{b-1}(f_{b-1})]_n \ = \ \begin{cases} \frac{1}{n_{b}!} \, \langle n_0(f_0) \ldots n_b(f_b) \rangle \; & \text{if~} n \geq n_0+\cdots+n_{b-1}, \\
	0  \; & \text{if~} n < n_0+\cdots+n_{b-1},
	\end{cases}
\]
where $\n \coloneqq (n_0, \ldots, n_{b-1})$ and $n_b \coloneqq n-(n_0+\cdots+n_{b-1}).$
\begin{lemma} \label{lemma: j_n injectivity on homology}
Let $j_{n, n+1} \colon \Xn \to \Xnp$ be the inclusion. The restriction map $j^*_{n, n+1}$ is surjective with
\[
j_{n,n+1}^* \big( \nf_{n+1} \big) \=
    \nf_{n}.
\]
Dually, the maps
$(j_{n,n+1})_*$ and $(j_n)_*\colon H_*(M(n))\to H_*(\Minfty)$ are injective.
\end{lemma}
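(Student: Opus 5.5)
Since the symmetrizations $\nf_{n+1}$ span $H^*(M(n+1))$, it suffices to compute $j_{n,n+1}^*$ on each of them. Surjectivity and the homological statements then follow by duality.

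We work upstairs on $M^{n+1}$, where everything is explicit. Let $\iota\colon M^n\to M^{n+1}$, $(x_1,\dots,x_n)\mapsto(x_1,\dots,x_n,*)$. Then $q_{n+1}\circ\iota=j_{n,n+1}\circ q_n$. Since $q_n^*$ identifies $H^*(M(n))$ with the invariants $H^*(M^n)^{S_n}$, injectively, it suffices to compute $\iota^*\langle n_0(f_0)\ldots n_b(f_b)\rangle$ in $H^*(M^n)$.

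Each summand $\sigma^*(v_1\times\cdots\times v_{n+1})$ restricts under $\iota$ as follows. The first $n$ tensor factors are kept, and the factor $v$ sitting in the last slot is evaluated on the point class $[*]=e_0^*$. This evaluation is $1$ if $v=f_b=1$. It is $0$ if $\deg v>0$, since the restriction of a positive-degree class to a point vanishes.

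Hence only the permutations placing one of the $n_b$ copies of $f_b$ in the last slot survive. Counting them, we get
\[
\iota^*\langle n_0(f_0)\ldots n_b(f_b)\rangle = n_b\,\langle n_0(f_0)\ldots (n_b-1)(f_b)\rangle .
\]
We must also check that the signs from graded commutativity are harmless. They are, because $f_b$ has degree $0$.

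Dividing by $n_b!$ turns the factor $n_b/n_b!$ into $1/(n_b-1)!$. This gives $j_{n,n+1}^*\nf_{n+1}=\nf_n$ when $n+1\ge n_0+\cdots+n_{b-1}$ with $n_b\ge1$. If $n_b=0$, i.e.\ $n+1=n_0+\cdots+n_{b-1}$, no term survives, so the image is $0$, which equals $\nf_n$ by definition.

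Surjectivity follows because every basis element $\nf_n$ of $H^*(M(n))$ arises as the image of $\nf_{n+1}$ (with the same $\n$).

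For the dual statements we use the universal coefficient theorem over a field. Under it, $(j_{n,n+1})_*$ is the transpose of $j_{n,n+1}^*$, so surjectivity of $j_{n,n+1}^*$ gives injectivity of $(j_{n,n+1})_*$. For $(j_n)_*$, recall that homology commutes with the colimit defining $\Minfty$: $H_*(\Minfty)=\varinjlim H_*(M(k))$, with the transition maps $(j_{k,k+1})_*$ all injective. A colimit of injections is injective from each stage, so $(j_n)_*$ is injective. Alternatively, this is visible directly from the description $\Sym(\tilde H_*(M))$ in Theorem~\ref{thm: free hopf algebra}.

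The main point needing care is the restriction computation: verifying that exactly the terms with $f_b$ in the last slot survive, and that the count yields the factor $n_b$ with no sign issues. The rest is formal.
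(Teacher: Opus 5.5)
Your proof is correct. The paper gives no argument of its own for this lemma. It refers to Section~7 of \cite{ZagierThesis} for the additive basis of $H^*(M(n))$. The only in-text ingredient is the remark, made in the proof that $H_*(\Minfty)\cong\Sym(\tilde H_*(M))$, that under $H_*(M(n))\cong\Sym^n(H_*(M))$ the map $(j_{n,n+1})_*$ becomes multiplication by $e_0^*=[*]$.

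That remark suggests going in the opposite order to yours:
\begin{itemize}
\item injectivity of $(j_{n,n+1})_*$ is immediate, since multiplication by a degree-zero free generator of a free graded-commutative algebra is injective;
\item surjectivity of $j_{n,n+1}^*$ then follows by dualizing degreewise;
\item injectivity of $(j_n)_*$ follows as you say.
\end{itemize}
What that route does not give without further work is the exact identity $j_{n,n+1}^*\nf_{n+1}=\nf_n$. For that one would have to pair the symmetrizations against monomials in the $e_i^*$. This is less direct, because the $e_i^*$ are dual to the $e_i$, not to the $f_i$.

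Your computation upstairs via $\iota$ gets the identity directly. It also shows why $1/n_b!$ is the right normalization: exactly the $n_b$ copies of $f_b$ can occupy the slot that is restricted to the base point.

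Two things you use tacitly are harmless under the paper's conventions. First, $f_b=1$ is the only degree-zero basis element because $M$ is connected. Second, when $n+1<n_0+\cdots+n_{b-1}$, both sides vanish by definition.
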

In other words, the elements $\nf_n \in H^*(\Xn)$ are stable under the inclusion $j_{n, n+1}$, and we denote the corresponding element in~$H^*(\Xinfty)$ by~$\nf=[n_0(f_0) \ldots n_{b-1}(f_{b-1})]$. These classes form an additive basis of $H^*(\Xinfty)$.

Consider $\pi_1^* + \cdots + \pi_n^* \; \colon \; H^*(M) \to H^*(M^n)^{S_n} \, \cong \, H^*(M(n))$ which induces a map 
  \[
\pi^* \colon  H^*(M) \to H^*(\Minfty).
  \]
Set 
 \begin{equation} 
p_i \ \coloneqq \ [1(f_i)]= \pi^*(f_i) \ \in \ H^{2s-d_i} (\Minfty).
   \end{equation}
(Note that $p_i$ has degree~$2s-d_i$, not $d_i$ as in the case of the~$u_i$.)
As before, the generator of top degree will play a distinguished role and we give it a special name:
   \begin{equation} \label{def: eta}
\eta \  \coloneqq \ p_0= [1(f_0)] \in H^{2s} (\Xinfty).
   \end{equation}
Let us now show that the classes $p_i$ ($i<b$) generate the cohomology algebra of $\Minfty$ freely.
 \begin{theorem} \label{thm: free cohomology algebra}
The algebra $H^*(\Minfty)$ is free graded-commutative on $p_0, \ldots, p_{b-1}$. Moreover
$p_0,\dots,p_{b-1}$ is a basis of the space of primitive elements of the Hopf algebra $H^*(\Minfty)$.
 \end{theorem}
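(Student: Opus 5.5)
We will deduce the statement by Hopf algebra duality from Theorem~\ref{thm: free hopf algebra}. By that theorem, $H_*(\Minfty)$ is the free graded-commutative algebra $\Sym(\tilde H_*(M))$ on $u_1,\dots,u_b$. The first step is to show that each $u_i$ is primitive for the coproduct induced by the diagonal $\delta$. The diagonal of $\Minfty$ restricted to $j_1(M)$ is $j_1\times j_1$ composed with the diagonal of $M$. To pass from this to $\Minfty\times\Minfty$ we use two facts: the diagonal map is natural, and $j_1$ composed with the diagonal of $M$ lands in $M\times M\subset M(1)\times M(1)$. Together these give
\[
\delta_*(u_i)\=\sum (j_1)_*(e'_k)\times (j_1)_*(e''_k),
\]
where $\sum e'_k\otimes e''_k$ is the coproduct of $e^*_i$ in $H_*(M)$ dual to the cup product.

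Expanding the cup-product coproduct in $H_*(M)$, the terms containing $e^*_0=[*]$ are exactly $e^*_i\otimes e^*_0+e^*_0\otimes e^*_i$. All other terms are products $u_a\otimes u_c$ with $a,c>0$. Therefore the $u_i$ are \emph{not} primitive in general. This is a real obstacle: $H_*(\Minfty)$ is free as an algebra but not primitively generated on the $u_i$.

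So we argue on the cohomology side instead, using the classes $p_i=\pi^*(f_i)$.

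\emph{Primitivity of $p_i$.} We compute $m^*p_i$. On $M(n)\times M(n')$, the pullback $m_{n,n'}^*$ corresponds, on $M^n\times M^{n'}$, to restricting $\sum_{k\le n+n'}\pi_k^*(f_i)$. This sum splits as $\sum_{k\le n}\pi_k^*f_i+\sum_{k>n}\pi_k^*f_i$. Hence $m^*p_i=p_i\otimes1+1\otimes p_i$, using the identification of $H^*(M(n))$ with invariants. So each $p_i$ is primitive.

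\emph{Counting.} By Theorem~\ref{thm: free hopf algebra}, $H_*(\Minfty)$ has Poincaré series $\prod_{d_i \text{ even}}(1-x^{d_i})^{-1}\prod_{d_i\text{ odd}}(1+x^{d_i})$ over $i=1,\dots,b$. As $i$ runs over $0,\dots,b-1$, $\deg p_i=2s-d_i$ runs over the multiset $\{d_1,\dots,d_b\}$ by Poincaré duality, with the same parities. So the free graded-commutative algebra $A$ on $p_0,\dots,p_{b-1}$ has the same Poincaré series as $H^*(\Minfty)$, which is of finite type.

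\emph{Injectivity of $A\to H^*(\Minfty)$.} This is the main step. We use the Milnor--Moore theorem: a connected graded-commutative cocommutative Hopf algebra of finite type over $\Q$ is free graded-commutative on its primitives. It therefore suffices to show that the $p_i$ are linearly independent primitives and span the primitives. Linear independence follows by pairing with $u_j$. We have $\langle p_i,u_j\rangle=\langle \pi^*f_i,(j_1)_*e^*_j\rangle=\langle f_i,e^*_j\rangle$ on $M(1)$, since $\pi^*$ restricts to the identity on $M$. This matrix is invertible after reindexing via Poincaré duality: the $f_i$ for $i<b$ form a basis of $\tilde H^*(M)$ dual to $e^*_j$, $j>0$, up to the duality bookkeeping.

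For spanning, the primitives of $H^*(\Minfty)$ are dual to the indecomposables $Q=I/I^2$ of $H_*(\Minfty)$. By Theorem~\ref{thm: free hopf algebra}, $Q$ has basis $u_1,\dots,u_b$, so the primitives have dimension $b$ in total with the right degrees. The $b$ independent primitives $p_i$ therefore span. Milnor--Moore then gives that $H^*(\Minfty)$ is free graded-commutative on $p_0,\dots,p_{b-1}$.

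The delicate points are the pairing computation (checking that the Kronecker pairing of a primitive with a decomposable vanishes is standard) and the sign conventions for odd classes.
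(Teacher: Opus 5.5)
Your proof is correct, and its skeleton is the paper's. You prove primitivity of the $p_i$ from the concatenation square. You prove linear independence: your pairing $\langle p_i,u_j\rangle=\langle j_1^*p_i,e_j^*\rangle$ is the paper's observation $j_1^*p_i=f_i$ in another guise. Then you invoke Milnor--Moore.

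Where you genuinely differ is the dimension count. The paper matches the monomials $\mathbf{p}^{\mathbf{n}}$ degree by degree with the additive basis $[\mathbf{n}(f)]$ of symmetrized classes. That basis comes from the description of $H^*(M(n))$ as $S_n$-invariants, for which the paper refers to the thesis, together with the stability lemma. You instead extract the count from Theorem~\ref{thm: free hopf algebra}, in one of two ways:
\begin{itemize}
\item via Poincar\'e series and universal coefficients, or
\item via the standard duality $PH^*(M(\infty))\cong (I/I^2)^\vee$. Here $I$ is the augmentation ideal of $H_*(M(\infty))$ under the Pontryagin product, and $I/I^2$ has basis the classes of $u_1,\dots,u_b$.
\end{itemize}

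Your route stays within what the paper actually proves and makes the claim about primitives immediate. An invertible pairing matrix between the $p_i$ and the classes of the $u_j$ in $I/I^2$ shows directly that the $p_i$ form a basis of $PH^*$. The paper's route reuses the basis $[\mathbf{n}(f)]$, which it needs anyway. Moreover, the degree-preserving bijection $\mathbf{p}^{\mathbf{n}}\leftrightarrow[\mathbf{n}(f)]$ is exactly what later makes the base-change map $\Phi$ a well-defined automorphism.

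Two small remarks. First, the paragraph on non-primitivity of the $u_i$ is correct but plays no role. Your two counting arguments are also redundant: either one alone closes the proof.

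Second, ``up to the duality bookkeeping'' should be replaced by the one-line reason the minor $(\langle f_i,e_j^*\rangle)_{i<b,\,j>0}$ is invertible. The full matrix over $0\le i,j\le b$ is a change of basis. Since $M$ is connected, its row $i=b$ (where $f_b=1$) and its column $j=0$ vanish except for a common entry $1$.
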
   
 \begin{proof}
We first show that each $p_i=\pi^*(f_i)$ is primitive for $i \in \{ 0, \ldots, b-1\}$. The following diagram is commutative:
\[
     \begin{tikzcd}
M^{n}\times M^{n'} \arrow{r}{\tilde{m}} 
\arrow{d}[swap]{q_n \times q_{n'}} & M^{n+n'} 
\arrow{d}{q_{n+n'}}\\
M(n) \times M(n') \arrow{r}{m}  & M(n+n'),
	\end{tikzcd}
   \]
where $\tilde{m}$ is the canonical isomorphism given by concatenation.  
This implies that the pull-back of the class $m^*( j^*_{n+n'} \,p_i)$ along the quotient map $q_n \times q_{n'}$ equals
\[
\tilde{m}^* \Big( \sum_{k=1}^{n+n'} \pi^*_k(f_i) \Big) \= \sum_{k=1}^{n} \pi^*_k(f_i) \otimes 1 \, + \, \sum_{k=n+1}^{n+n'} 1 \otimes \pi^*_k(f_i),
\]
which is the pullback of $j^*_n \, p_i \otimes 1 + 1 \otimes j^*_{n'} \, p_i $. As $(q_n \times q_{n'})^*$ is injective, passing to the colimit yields $m^*(p_i)=p_i\otimes1+1\otimes p_i$.
 
The Hopf algebra $H^*(\Minfty)$ is connected, graded-commutative and cocommutative of finite type. Since we work over $\Q$, the Milnor--Moore theorem~\cite{milnor1965structure} gives that the inclusion
of the primitives induces an isomorphism
\[
\Sym \big(P H^*(\Minfty) \big)\;\xrightarrow{\ \cong\ }\; H^*(\Minfty).
\]
It therefore suffices to show that $p_0,\dots,p_{b-1}$ is a basis of
$P H^*(\Minfty)$. The classes $p_i$ are linearly independent, since $j^*_1 p_i \; =\; f_i$ and $f_0, \ldots, f_{b-1}$ are part of a basis of $H^*(M)$. 

It remains to verify that the free graded-commutative subalgebra $A\subset H^*(\Minfty)$ generated by $p_0, \ldots, p_{b-1}$ is the whole $H^*(\Minfty)$. Let us compare their dimensions degree
by degree. Both~$A$ and~$H^*(\Minfty)$ are spanned by the bases indexed by the same multisets, namely, $\bp^{\n} = p_0^{n_0} \ldots \, p_{b-1}^{n_{b-1}}$ and $\nf= [n_0(f_0) \ldots n_{b-1}(f_{b-1})]$. The corresponding basis elements have the same degree. Thus the two graded vector spaces have equal dimensions in every degree, and $A=H^*(\Minfty)$, which finishes the proof.
\end{proof}

\begin{proposition} \label{prop: D is a derivation}
Let $\eta$ be the element defined in \eqref{def: eta}. The operation $D$ of capping with $\eta$ is a derivation on the algebra $H_*(\Xinfty)$:
    \[
    D(xy) \ = \ (Dx)  y + x (Dy).
    \]
    Furthermore, capping with the powers $\eta^k$ is the $k$-th iterate of $D$:
    \[
    \eta^k \cap (-) = D^k.
    \]
\end{proposition}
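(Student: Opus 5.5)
The plan is to use the standard Hopf-algebra identity for cap products with primitive classes. For an $H$-space with multiplication $m$, cohomology class $c$ and homology classes $x,y$, naturality of the cap product gives
\[
c \cap m_*(x\times y) \= m_*\bigl(m^*(c)\cap (x\times y)\bigr).
\]
For a cross product one also has
\[
(a\otimes b)\cap(x\times y) \= \pm\,(a\cap x)\times(b\cap y),
\]
where the sign is $(-1)^{\deg b\,(\deg x-\deg a)}$ in the usual convention. This is the cap-product version of the Künneth formula, and it holds after passing through the colimit because everything is compatible with the finite stages $M(n)\times M(n')\to M(n+n')$.

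Take $c=\eta$. By Theorem~\ref{thm: free cohomology algebra}, $\eta=p_0$ is primitive, so $m^*\eta=\eta\otimes 1+1\otimes\eta$. Substituting,
\[
D(xy)=m_*\bigl((\eta\cap x)\times y \pm x\times(\eta\cap y)\bigr).
\]
Because $\deg\eta=2s$ is even, the sign $(-1)^{2s(\deg x)}$ is $+1$, so this equals $(Dx)y+x(Dy)$. One detail needs checking: working with $M(\infty)$ as a colimit. Homology and Pontryagin products are computed at finite stages $M(n)$. Here $\eta$ restricts to $j_n^*\eta$, and the multiplication square commutes with $m_{n,n'}$ together with the projection formula. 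So the identity can be verified on $M(n)\times M(n')$ using $m_{n,n'}^*(j_{n+n'}^*\eta)=j_n^*\eta\otimes1+1\otimes j_{n'}^*\eta$. This was shown in the proof of Theorem~\ref{thm: free cohomology algebra} via the pullback to $M^n\times M^{n'}$ and injectivity of $(q_n\times q_{n'})^*$.

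The second claim is formal. The cap product is a module action: $(a\cup b)\cap x=a\cap(b\cap x)$. Hence $\eta^k\cap x=\eta\cap(\eta^{k-1}\cap x)$, and induction on $k$ gives $D^k$.

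The main obstacle I expect is simply bookkeeping the signs in the cross/cap compatibility. The evenness of $\deg\eta$ makes every sign disappear, so no real difficulty should arise.
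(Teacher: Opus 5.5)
Your proof is correct and is essentially the paper's argument: naturality of the cap product along $m$ together with the primitivity $m^*\eta=\eta\otimes 1+1\otimes\eta$ gives the derivation property, and the module identity for cup and cap products gives $\eta^k\cap(-)=D^k$ by induction. The paper leaves two details implicit: the explicit cross/cap sign check, which is trivial because $\deg\eta=2s$ is even, and the reduction to the finite stages $M(n)\times M(n')\to M(n+n')$. You handle both correctly.
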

  \begin{proof}
Using the naturality of the cap
product along with the primitivity of $\eta$ we obtain:
 \[
 D(xy) \ = \ \eta \cap (m_*(x \times y))
 \ = \ m_* \big( m^* \eta \cap (x \times y) \big) \ = \ (D x) y + x (Dy).
 \]
 Furthermore, the compatibility formula between cup and cap products yields
 \[
\eta^k \cap \beta \ = \ (\eta^{k-1} \cup \eta) \cap \beta \ = \ \eta^{k-1} \cap (\eta \cap \beta) \ = \ D^ k (\beta).
\qedhere
 \]
\end{proof}

\subsection*{The base-change map $\Phi$} 
As before we denote by $\n$  the vector $(n_0, \ldots, n_{b-1})$. Define the base-change map $\Phi \colon H^*(\Minfty) \to H^*(\Minfty)$ via
\begin{equation} \label{BsChange}
    \Phi(\bp ^{\n}) \; =\; \nf.
\end{equation}
We extend it $\Q$-linearly to obtain a well-defined degree-preserving automorphism of $H^*(\Minfty)$.
\begin{lemma}
      The map $\Phi$ is $\Q[[\eta]]$-linear.
  \end{lemma}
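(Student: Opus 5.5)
The plan is to reduce $\Q[[\eta]]$-linearity to the single identity
\[
\eta \cup [n_0(f_0)\, n_1(f_1) \ldots n_{b-1}(f_{b-1})] \= [(n_0+1)(f_0)\, n_1(f_1)\ldots n_{b-1}(f_{b-1})]
\]
in $H^*(\Minfty)$. In the notation $\nf$, it reads $\eta\cup\Phi(\bp^{\n})=\Phi(\eta\,\bp^{\n})$. Since $\eta=p_0$, we have $\eta\,\bp^{\n}=\bp^{\n+(1,0,\ldots,0)}$. So the identity says exactly $\Phi(\eta x)=\eta\,\Phi(x)$ for every basis monomial $x$, and by linearity for every $x$. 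Iterating gives $\Phi(\eta^k x)=\eta^k\Phi(x)$, hence $\Phi(P(\eta)x)=P(\eta)\Phi(x)$ for polynomials $P$. For a power series in $\eta$ the same holds, because $\Phi$ is degree-preserving and each graded piece only sees finitely many terms of the series.

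To prove the identity, I restrict to $M(n)$ for each $n$ and pull back to $H^*(M^n)^{S_n}$. This loses nothing, because the classes $\nf_n$ are stable and $q_n^*$ is injective (Lemma~\ref{lemma: j_n injectivity on homology}). There $\eta$ becomes $\sum_k \pi_k^*(f_0)$, and $[\n(f)]_n=\frac1{n_b!}\langle n_0(f_0)\ldots n_b(f_b)\rangle$ with $n_b=n-(n_0+\cdots+n_{b-1})$.

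The key observation is that $f_0\cup f_i=0$ for every $i<b$, since the degree would exceed $2s$, while $f_0\cup f_b=f_0$. Consider multiplying $\pi_k^*(f_0)$ by a monomial $\pi_1^*(v_1)\cdots\pi_n^*(v_n)$ occurring in the symmetrization. It is nonzero only if the $k$-th slot carries $f_b=1$, and then it replaces that $1$ by $f_0$. There are no sign issues, since $f_0$ has even degree $2s$ and $f_b$ has degree $0$.

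The remaining step, and the only place where care is needed, is the multiplicity count. Each distinct monomial of pattern $(n_0,\ldots,n_b)$ occurs in $\langle n_0(f_0)\ldots n_b(f_b)\rangle$ with coefficient $N=n_0!\cdots n_b!$, the size of the stabilizer, up to the usual graded signs which are unaffected here. Multiplying by $\eta$, each monomial of the new pattern $(n_0+1,n_1,\ldots,n_b-1)$ arises from exactly $n_0+1$ old monomials, one for each choice of which $f_0$-slot was previously a $1$. In $\langle (n_0+1)(f_0)\ldots(n_b-1)(f_b)\rangle$ that monomial has coefficient $N'=(n_0+1)!\,n_1!\cdots(n_b-1)!$. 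Hence
\[
\eta\cup\langle \text{old}\rangle=\frac{(n_0+1)N}{N'}\langle\text{new}\rangle=n_b\,\langle\text{new}\rangle.
\]
Dividing by $n_b!$ gives $\frac1{(n_b-1)!}\langle\text{new}\rangle=[\text{new}]_n$.

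The case $n_b=0$ must be checked separately. There the left side vanishes because no slot carries $1$, and the right side is $0$ by the convention defining $[\,\cdot\,]_n$ when $n<n_0+\cdots+n_{b-1}+1$. Both sides therefore agree on every $M(n)$, and passing to the colimit proves the identity, and with it the lemma.
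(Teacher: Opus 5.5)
Your proposal is correct and follows the paper's route. The paper also reduces $\Q[[\eta]]$-linearity to the identity $\eta\,[n_0(f_0)\ldots n_{b-1}(f_{b-1})]=[(n_0+1)(f_0)\ldots n_{b-1}(f_{b-1})]$, justifying it only by noting that $f_0\cup f_i=0$ for $f_i\neq 1$; your multiplicity count (the factor $n_b$ turning $1/n_b!$ into $1/(n_b-1)!$) and your separate treatment of $n_b=0$ supply exactly the details the paper leaves implicit.
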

   \begin{proof}
       This follows directly from the fact that 
       \[
       \eta \ [n_0(f_0)\ldots n_{b-1}(f_{b-1})] = [(n_0+1)(f_0)\ldots n_{b-1}(f_{b-1})],
       \]
       which holds since $f_0$ is of top degree in $H^*(M)$, so its cup product with any $f_i \neq 1$ vanishes.
   \end{proof}  
   \subsection*{The universal element $U$} Recall that given any homogeneous basis $\{e_i\}$ of $H^*(M)$ and the dual basis $\{ e^*_i \}$  of $H_*(M)$, the element
 \[
 C= \sum_{i} e_i \otimes e^*_i \ \in \ H^*(M)\otimes H_*(M)
 \]
 does not depend on the choice of basis. This can be seen using the canonical identification $H^*(M)\otimes H_*(M) \cong \Hom(H_*(M), H_*(M))$, under which $C$ corresponds to $\id_{H_*(M)}$. Applying the map $ \id \otimes (j_1)_*$
 to~$C$ we obtain the class
 \begin{equation} \label{eq: def of U}
     U \, \coloneqq  \; \big( \id \otimes (j_1)_* \big) (C) \; =\;  \sum_{i=0}^{b} e_i \otimes u_i \ \in \ H^*(M) \otimes H_*(\Minfty),
  \end{equation}
 which is again independent of the choice of basis.
 \begin{rem}
     We note that the elements of the mixed tensor product $H^*(M) \otimes H_*(\Minfty)$ are graded by total degree and multiplication (cup product in the first component and Pontryagin product in the second) carries the Koszul sign:
 \[
(a \otimes b)(a' \otimes b')=(-1)^{|b|\cdot|a'|} aa' \otimes bb' \ \in \ H^*(M) \otimes H_*(\Minfty).
\]
With this convention $H^*(M) \otimes H_*(\Minfty)$ is a graded-commutative algebra.
\end{rem}
For $x \in H_*(M)$ and $a \otimes b \in H^*(M) \otimes H_*(\Minfty)$ there is an evaluation pairing given by
\[
\langle a \otimes b, x \rangle \; \coloneqq \; (ev_x \otimes \id)(a\otimes b) \=  \langle a, x \rangle \, b\; \in \; H_*(\Minfty).
\]
 
\begin{lemma} \label{lemma: Delta_* is U^r}
    For any positive integer $r$ and $x \in H_*(M)$ we have
    \[
    (\Delta_r)_* (x)\; =\; \langle U^r, x\rangle.
    \]
\end{lemma}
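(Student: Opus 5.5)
The plan is to factor $\Delta_r$ as $m^{(r)}\circ (j_1)^{\times r}\circ \operatorname{diag}_r$, where $\operatorname{diag}_r\colon M\to M^r$ is the diagonal. This works because $j_r\circ q_r = m^{(r)}\circ (j_1)^{\times r}$, exactly as in the proof of Lemma~\ref{lem: [X(n)] in alpha}. Then
\[
(\Delta_r)_*(x) \= m^{(r)}_*\bigl((j_1)_*^{\otimes r}\,(\operatorname{diag}_r)_*(x)\bigr),
\]
so the whole question reduces to computing $(\operatorname{diag}_r)_*(x)\in H_*(M)^{\otimes r}$ in terms of the bases $\{e_i\}$ and $\{e^*_i\}$.

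Next I compute the pushforward of the diagonal by duality. For homogeneous $a_1,\dots,a_r\in H^*(M)$ we have
\[
\langle a_1\times\cdots\times a_r,(\operatorname{diag}_r)_*x\rangle \= \langle a_1\cup\cdots\cup a_r,x\rangle .
\]
Expanding in the dual basis gives
\[
(\operatorname{diag}_r)_*(x)\=\sum_{i_1,\dots,i_r}\pm\,\langle e_{i_1}\cup\cdots\cup e_{i_r},x\rangle\; e^*_{i_1}\times\cdots\times e^*_{i_r},
\]
with the Koszul sign that comes from pairing a cross product of cohomology classes with a cross product of homology classes. Applying $m^{(r)}_*\circ(j_1)_*^{\otimes r}$ turns $e^*_{i_1}\times\cdots\times e^*_{i_r}$ into the Pontryagin product $u_{i_1}\cdots u_{i_r}$. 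This yields
\[
(\Delta_r)_*(x)\=\sum \pm\,\langle e_{i_1}\cdots e_{i_r},x\rangle\,u_{i_1}\cdots u_{i_r}.
\]

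On the other side, I expand $U^r=\bigl(\sum_i e_i\otimes u_i\bigr)^r$ in the graded-commutative algebra $H^*(M)\otimes H_*(\Minfty)$, using the product rule of the Remark above. This gives
\[
U^r\=\sum_{i_1,\dots,i_r}\epsilon\; e_{i_1}\cdots e_{i_r}\otimes u_{i_1}\cdots u_{i_r},
\]
where $\epsilon=\prod_{k<l}(-1)^{|u_{i_k}||e_{i_l}|}$. Pairing with $x$ then produces exactly the same shape of sum as above.

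The only real obstacle is checking that the signs agree. Here $|u_i|=|e_i|=d_i$. The Koszul sign in the dual-basis expansion of the diagonal comes from the convention
\[
\langle a_1\times\cdots\times a_r,\, y_1\times\cdots\times y_r\rangle \= \prod_{k<l}(-1)^{|a_l||y_k|}\prod_k\langle a_k,y_k\rangle,
\]
and this produces precisely $\epsilon$. To keep the bookkeeping manageable I would first do $r=2$ and then induct on $r$, writing $\operatorname{diag}_{r}=(\operatorname{diag}_{r-1}\times\id)\circ\operatorname{diag}_2$ and $U^r=U^{r-1}U$. An alternative that avoids signs entirely is to note that $C\in H^*(M)\otimes H_*(M)$ corresponds to the identity, so $\langle C^r,x\rangle$ is the image of $x$ under the iterated coproduct followed by the product (in the sense of the Hopf-type pairing). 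Naturality of $(j_1)_*$ and $m^{(r)}_*$ then finishes the argument.
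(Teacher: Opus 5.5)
Your proof is correct and follows essentially the paper's route: the same factorization $\Delta_r=m^{(r)}\circ(j_1)^{\times r}\circ\operatorname{diag}_r$, with the comparison to $U^r$ done by an explicit dual-basis expansion and sign matching. The paper instead uses the identity $\langle U,y\rangle=(j_1)_*y$ together with the multiplicativity $\langle AB,x\rangle=\sum_k\langle A,x^k_{(1)}\rangle\langle B,x^k_{(2)}\rangle$, which is your stated alternative. Your sign bookkeeping checks out, since both sides carry the same factor $\prod_{k<l}(-1)^{d_{i_k}d_{i_l}}$.
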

\begin{proof}
    The map $\Delta_r$ can be written as a composition:
    \[
M
\xrightarrow{\ \operatorname{\delta_r}\ }
M^r
\xrightarrow{\ (j_1)^{\times r} \ }
\Minfty^r
\xrightarrow{\ m^{(r)} \ }
\Minfty .
\]
    Therefore, setting $(\delta_r)_*(x)\,=\,\sum_k x^k_{(1)} \otimes\cdots \otimes x^k_{(r)}$, the iterated diagonal coproduct, we have:
    \begin{equation*}
        (\Delta_r)_*(x) \; =\; \sum_k (j_1)_* x^k_{(1)} \ldots (j_1)_* x^k_{(r)}\= \sum_k \bigl\langle U, \, x^k_{(1)} \bigl \rangle \ldots \bigl\langle  U, \, x^k_{(r)} \bigl \rangle \; =\; \langle U^r, x\rangle,  
    \end{equation*}
where we used that $\bigl\langle  U, \, y \bigl \rangle \= (j_1)_* y$ for any $y \in H_*(M)$. In the last equality we used the following multiplicativity for even-degree classes $A, B$:
\begin{equation} \label{coproduct multiplicativity}
    \langle AB, x \rangle \= \sum_k \; \langle A, x^k_{(1)}\rangle \, \langle B, x^k_{(2)} \rangle.
\end{equation}
This is true since for $A=a \otimes a'$ and $B=b \otimes b'$ we have that the Koszul sign $(-1)^{|a'|\cdot|b|}$ from
\[
\langle AB, x \rangle \= (-1)^{|a'|\cdot|b|} \,\langle ab \otimes a'b', x \rangle
\]
combines with the sign of the evaluation $\langle ab, \sum_k x^k_{(1)} \otimes x^k_{(2)} \rangle \= \sum_k (-1)^{|b||x^k_{(1)}|} \langle a, x^k_{(1)} \rangle \langle b, x^k_{(2)} \rangle $. As $\langle a, x^k_{(1)} \rangle$ vanishes unless $|a|=|x^k_{(1)}|,$ the second sign is $(-1)^{|b|\cdot|a|}$. Since by our convention the degree of $A$ is the sum of the degrees of $a$ and $a',$ the total sign is $(-1)^{|b| \cdot |A|}$, which is +1 by the evenness hypothesis for $A$. In our case the class $U$ has even total degree, as each summand $e_i \otimes u_i$ has degree $2d_i$, and hence the multiplicativity \eqref{coproduct multiplicativity} holds if $A$ and $B$ are powers of $U$.
\end{proof}

 \section{Cycle decomposition} \label{Sect3}
\subsection*{Cycle decomposition}
Consider the action of an element $\sigma \in S_n$ on~$M^n$.  We write $\sigma $ as the product of 
disjoint cycles of length $r_1,\dots,r_k\ge1$ with $r_1+\cdots+r_k=n$.  By renumbering the coordinates, we can
write $M^n$ as $M^{r_1}\times\cdots\times M^{r_k}$ and identify $\sigma$ with $\sigma_{r_1}\cdots\sigma_{r_k}$, 
where $\sigma_{r_i}$denotes the cyclic permutation of the coordinates of $M^{r_i}$.   (This is a slight
abuse of notation, since there may be several indices with the same value of $r_i$, but the different factors
$\sigma_{r_i}$ are acting on different copies of $M^{r_i}$ and there should be no confusion.)  In other words,
  \[ 
    (M^n, \sigma) \, =\, \prod_{i=1}^{k} (M^{r_i}, \sigma_{r_i}).
  \]
The fixed-point set $F_i$ of the action of~$\sigma_{r_i}$ on $M^{r_i}$ is simply the diagonal and can be canonically 
identified with $M$ itself. The fixed-point set of $\sigma$ then has a corresponding decomposition 
  \[
  F_\sigma \; = \; \prod_{i=1}^{k} F_i \, = \; \prod_{i=1}^{k} \diag_{r_i}(M) \; \cong \; M^k.
  \]
\begin{lemma} \label{lemma: multiplicativity of L' wrt cycle decomposition}
	The $L'$-class is multiplicative, with respect to the cohomological cross product, relative to the cycle decomposition of $\sigma$:
  \[
  L'(\sigma, M^n) \, = \, L'(\sigma_{r_1}, M^{r_1}) \times\cdots \times L'(\sigma_{r_k}, M^{r_k}).
  \]
\end{lemma}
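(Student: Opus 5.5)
The plan is to unwind the definition \eqref{eq: three factors of L'} of $L'(\sigma,M^n)$ and check that each of its three factors splits as a cross product along the identification $(M^n,\sigma)=\prod_i (M^{r_i},\sigma_{r_i})$. Over the fixed-point set $F_\sigma=\prod_i F_i$, the tangent bundle of $F_\sigma$ is the external product $TF_1\times\cdots\times TF_k$. Since each $F_i$ is the diagonal copy of $M$, the Hirzebruch class $L(F_\sigma)$ equals $L(F_1)\times\cdots\times L(F_k)$. This follows from the multiplicativity of the genus $L$ under Whitney sums, together with the naturality $L(\pi_i^*\xi)=\pi_i^*L(\xi)$ for the projections $\pi_i\colon F_\sigma\to F_i$.

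Next I would treat the normal bundle. The restriction of $T(M^n)$ to $F_\sigma$ is $\bigoplus_i \pi_i^*\bigl(T(M^{r_i})|_{F_i}\bigr)$, and $\sigma$ acts on the $i$th summand through $\sigma_{r_i}$ alone. Hence the normal bundle splits $\sigma$-equivariantly as $N^\sigma=\bigoplus_i \pi_i^* N^{\sigma_{r_i}}$. Decomposing each summand into eigenbundles, we get $N^\sigma_\pi=\bigoplus_i \pi_i^*N^{\sigma_{r_i}}_\pi$ and $N^\sigma_\theta=\bigoplus_i \pi_i^*N^{\sigma_{r_i}}_\theta$ for every $0<\theta<\pi$, where a summand is zero whenever the angle $\theta$ does not occur for that cycle. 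The classes $L_\theta$ are multiplicative under direct sums, being products over the Chern roots, and $L_\theta$ of the zero bundle is $1$, since the degree-zero term is $\coth(i\theta/2)^q$ with $q=0$. Likewise $L_\pi=e\cdot L^{-1}$ is multiplicative, because both the Euler class and the $L$-class are multiplicative. Naturality under the $\pi_i^*$ then turns each factor into a cross product. Multiplying the three factors and using $(a\times b)(a'\times b')=\pm\,aa'\times bb'$ gives the claim.

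The main obstacle is keeping orientations and signs consistent. The Euler class of $N_\pi$ depends on an orientation of $N^\sigma_\pi$, and the Koszul signs in the products of cross products must all cancel. For the products, I would note that every factor lives in even degrees: $L$ and $L_\theta$ are even, and the $\pi$-eigenspace of an $r$-cycle on $\R^r\otimes T_xM$ has dimension $2s$ or $0$, so $e(N_\pi)$ is also even. Consequently no signs arise.

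For orientations, I would orient $N^\sigma$ compatibly with the product orientations of $M^n$ and $F_\sigma\cong M^k$. The dimensions involved are all even, so reordering the summands introduces no sign. The complex structures on the $N_\theta$ determined by the eigenvalue $e^{i\theta}$ then fix an orientation of $N_\pi$ that is compatible summand by summand. With these choices, the orientation of $N^\sigma_\pi$ is the product of the orientations of the $N^{\sigma_{r_i}}_\pi$, and the Euler classes multiply as cross products.
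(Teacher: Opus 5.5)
Your proposal is correct and follows the paper's proof essentially step by step. Like the paper, you split the tangent bundle of $F_\sigma$, split $N^\sigma$ equivariantly into pulled-back eigenbundles, invoke multiplicativity and naturality of $L$, $e\cdot L^{-1}$ and $L_\theta$, and note that no Koszul signs arise since all classes have even degree. Your extra remarks fill in points the paper leaves implicit: that $L_\theta$ of the zero bundle is $1$ for angles absent from a given cycle, and that the orientation of $N^\sigma_\pi$ agrees with the product orientation because all ranks are even.
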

\begin{proof}
 Let us look at each of the three factors from the definition of $L'(\sigma, M^n)$ (see equation \eqref{eq: three factors of L'}).   
     \begin{itemize}
         \item The tangent bundle $T(F_\sigma)$ decomposes into the external product of the tangent bundles 
          $T(F_i)$, which results in the cross product of $L$-classes:
         \[
         L(F_\sigma)\; =\; L(F_1) \times \cdots \times L(F_k) \ \in \ H^*(F_1\times \cdots \times F_k)=H^*(F_{\sigma}).
         \]
         \item The $(-1)$-eigenbundle of $N^{\sigma}$ decomposes into the external direct sum of the $(-1)$-eigenbundles of 
                the pulled-back normal bundles $N^{\sigma_{ri}} \coloneqq N(F_i \subset M^{r_i} )$ (some of which 
             could be trivial). Both the Euler class and $L^{-1}$ take the external sum to the cross product. 
         \item Again using that the normal bundle $N^\sigma$ decomposes as the external direct sum over $F_\sigma= \prod_i F_i$ 
     of the pulled-back normal bundles $N^{\sigma_{r_i}}$, and that the action of $\sigma$ restricted to each summand is 
         exactly the action of $\sigma_{r_i}$, we get the decomposition for the eigenbundles
         \[N^\sigma_\theta \=  \bigoplus_{i=1}^{k} \, \pr^*_i \, N_{\theta}^{\sigma_{r_i}}, \]
         where $\pr_i $ is the projection of $F_\sigma$ to $F_i$. We note that even though $L_{\theta}$ is unstable (the constant term 
         of its definingcharacteristic power series is different from 1), it is still multiplicative.
        Thus $L_{\theta} (N^{\sigma}_{\theta}) \= \bigtimes_i L_{\theta} (N_{\theta}^{\sigma_{r_i}}).$
     \end{itemize}
     Finally, multiplying the three identities above yields the claim with no extra sign, since all classes that appear have even degrees.
\end{proof}

\subsection*{Multiplicativity of the homological class $l'(\sigma, M^n)$} Note that the action of the symmetric group~$S_n$ on~$M^n$ is orientation-preserving and faithful, so the averaging formula of equation \eqref{eq: averaging formula for L(g, M)} gives 
   \[
 \pi^* L(\Xn) \; =\; \sum_{\sigma \in S_n} L(\sigma, M^n) \ \in  H^*(M^n), 
   \]
where $\pi \colon M^n \to \Xn$ is the projection map. 
Capping both sides with the fundamental class $[M^n]$ and then pushing forward along $\pi \colon M^n \to M(n)$ results in
  \begin{equation} \label{eq: l(X(n)) as a sum of capped L classes}
    n! \: l(M(n)) \; =\; \sum_{\sigma \in S_n} \pi_* \big( L(\sigma, M^n) \cap [M^n] \big).
  \end{equation}
Here for the left-hand side we used that $\deg \pi=|S_n|=n!$ and the naturality of the cap product:
 \begin{align*}
     \pi_* \big(  \pi^* L(\Xn) \cap [M^n] \big) \; &= \; L (\Xn)  \cap  \pi_* [M^n] \\
     &= \; n! \: L(M(n))  \cap [\Xn]\\
     &=\; n! \: l(M(n)).
  \end{align*}
By the construction of the equivariant $L$-class via the Umkehr map \eqref{eq: equivariant L-class def} we have:
   \begin{equation}
       L(\sigma, M^n) \cap [M^n] \ = \ i^\sigma_* (L'(\sigma, M^n) \cap [F_{\sigma}]),
   \end{equation}
where  $i^\sigma$ denotes the inclusion of the fixed-point set $F_\sigma  = (M^n)^\sigma \hookrightarrow M^n$. 
Using our notation 
\[
l'(\sigma, M^n) \; \coloneqq \; L'(\sigma, M^n) \cap [F_{\sigma}] \ \in \ H_*(F_{\sigma}),
\]
we can rewrite equation \eqref{eq: l(X(n)) as a sum of capped L classes} as
 \begin{equation} \label{eq: averaging formula in homology}
     l(M(n))\; =\; \frac{1}{n!} \sum_{\sigma \in S_n} \pi_* i^\sigma_* \big( l'(\sigma, M^n) \big).
 \end{equation}
 \begin{rem}
     In~\cite[Theorem 10.1] {banagl2026equivariant} Banagl established a generalization of the averaging formula~\eqref{eq: averaging formula in homology} for homological $l$-classes of Witt pseudomanifolds.
 \end{rem}
To keep the formulas concise, for the permutation $\sigma_r:M^r\to M^r$ we set 
\[
l'(\sigma_{r}) \ \coloneqq \ l'(\sigma_{r}, M^{r}).
\]
Note that the fixed-point set $F_{\sigma_r}$ of the $\sigma_r$ action on $M^{r}$ is canonically diffeomorphic to $M$. 
The next lemma is a homological counterpart of the multiplicativity of the $l'$-class (now with respect to the homological cross product) relative to the cycle decomposition of $\sigma$.
\begin{lemma} There is an equality of homological $l'$-classes
     \[
  l'(\sigma, M^n) \; = \; l'(\sigma_{r_1}) \times \cdots \times l'(\sigma_{r_k}) \ \in \ H_*(F_{\sigma}).
  \]
Here $\sigma=\sigma_{r_1}\cdots\sigma_{r_k}$ as before and~$\times$ means the homological cross product.
\
\end{lemma}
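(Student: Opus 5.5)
The plan is to deduce this directly from Lemma~\ref{lemma: multiplicativity of L' wrt cycle decomposition} by capping with fundamental classes. The first step is to record that under the canonical identification $F_\sigma=F_1\times\cdots\times F_k$, with each $F_i=\diag_{r_i}(M)\cong M$, the fundamental class splits as a cross product:
\[
[F_\sigma] \= [F_1]\times\cdots\times[F_k].
\]
This needs the orientations to match. Each $F_i$ carries the orientation of $M$ via the diagonal identification, and $F_\sigma$ is oriented in the same way, through $F_\sigma\cong M^k$ with the product orientation. Since $\dim M=2s$ is even, reordering factors introduces no sign. This is also the orientation used implicitly when forming $L'(\sigma,M^n)\cap[F_\sigma]$ and the Umkehr map, so the identification is consistent.

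The second step is to use the compatibility of cap and cross products for a product space $A\times B$:
\[
(a\times b)\cap(x\times y) \= (-1)^{|b|(|x|-|a|)}\,(a\cap x)\times(b\cap y).
\]
The exact sign convention will not matter. By Lemma~\ref{lemma: multiplicativity of L' wrt cycle decomposition}, each $L'(\sigma_{r_i},M^{r_i})$ is a sum of even-degree classes, since it is built from $L$-classes, Euler classes of even-rank bundles and $L_\theta$ of complex bundles. Each $[F_i]$ is also of even degree $2s$, so every sign in the formula is $+1$.

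Inducting on $k$ then gives
\[
l'(\sigma,M^n) \= \bigl(L'(\sigma_{r_1})\times\cdots\times L'(\sigma_{r_k})\bigr)\cap\bigl([F_1]\times\cdots\times[F_k]\bigr) \= \prod_i{}^{\times}\bigl(L'(\sigma_{r_i})\cap[F_i]\bigr),
\]
and the right-hand side is exactly $l'(\sigma_{r_1})\times\cdots\times l'(\sigma_{r_k})$.

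The only step needing care is the evenness of $L'_\pi$. If $r_i$ is even, the $(-1)$-eigenbundle of $\sigma_{r_i}$ on the normal bundle of $F_i$ is isomorphic to $TM$, since the eigenvalue $-1$ occurs once among the $r_i$-th roots of unity, and $TM$ has even rank $2s$. Its Euler class therefore has even degree. For odd $r_i$ this eigenbundle is zero. Hence all classes involved are even and the argument goes through without sign issues.
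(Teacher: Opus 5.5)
Your proposal is correct and follows the paper's proof: you split $[F_\sigma]$ as the cross product of the $[F_{\sigma_{r_i}}]$, apply the multiplicativity lemma for $L'$ together with the cap/cross-product compatibility formula, and observe that every sign is $+1$ because all classes involved have even degree. Your extra check that the $(-1)$-eigenbundle $N_\pi\cong TM$ has even rank spells out the evenness that the paper simply asserts.
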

\begin{proof}
 We have the following equality of fundamental classes
   \[
[M^k]=[F_{\sigma}]=[F_{\sigma_{r_1}}] \times\cdots \times [F_{\sigma_{r_k}}]=\underbrace{[M] \times\cdots \times [M]}_{k}. 
   \]
Using the decomposition for $L'(\sigma, M^n)$ from \autoref{lemma: multiplicativity of L' wrt cycle decomposition} and the formula relating cross and cap products (see e.g.~\cite[5.6.21]{spanier1966homology}) we get
\begin{align*}
     L'(\sigma, M^n) \cap [F_{\sigma}] \; &=\; \big( L'(\sigma_{r_1}, M^{r_1}) \times\cdots \times L'(\sigma_{r_k}, M^{r_k}) \big) \cap  \big( [F_{\sigma_{r_1}}] \times \cdots \times [F_{\sigma_{r_k}}] \big) \\
     &=\; \big( L'(\sigma_{r_1}, M^{r_1}) \cap [F_{\sigma_{r_1}}] \big) \times\cdots \times \big( L'(\sigma_{r_k},M^{r_k}) \cap [F_{\sigma_{r_k}}] \big) \\
     &=\; l'(\sigma_{r_1}) \times\cdots \times l'(\sigma_{r_k}) .
  \end{align*}
 Note that no extra sign appears, since the $L'$-classes live in even degrees.
\end{proof}
\subsection*{Generating series for $l(M(n))$} Let us rewrite our generating series using the multiplicativity with respect to 
the cycle decomposition obtained above. Given a permutation $\sigma \in S_n$ let $(m_1, \ldots, m_n)$ be its cycle type, 
meaning that $\sigma$ has $m_r$ cycles of length $r$. Note that for each tuple $(m_1, \ldots, m_n)$ there are exactly 
$\frac{n!}{\prod_{r=1}^{n} r^{m_r} \, m_r! }$ permutations with this cycle type.  Hence
\begin{equation} \label{Eq: generating series reduction 1}
\begin{aligned} 
    &\sum_{n=0}^{\infty} \; (j_n)_* l(\Xn) \cdot t^n \= \sum_{n=0}^{\infty} \Big( \frac{1}{n!} \ 
\sum_{\sigma \in S_n} (j_n)_{*} \pi_* i^\sigma_* \big( l'(\sigma, M^n) \big) \Big) \cdot t^n\\[5pt]
    &=\; \sum_{n=0}^{\infty} \bigg( \frac{1}{n!} \, \sum_{\substack{m_1, \ldots, m_n \geq 0 \\ 
m_1+2m_2+\cdots\thin=\thin n}} \; \frac{n!}{\prod_{r=1}^{n} r^{m_r} \, m_r!} \ (j_n)_{*} \pi_* i^\sigma_* \Big( l'(\sigma_1)^{\times m_1}
 \times\cdots \times l'(\sigma_n)^{\times m_n} \Big) \cdot t^n \bigg)\\[5pt]
    &=\; \sum_{m_1, m_2, \ldots \geq 0} \ \prod_{r \geq 1} \, \frac{t^{rm_r}}{r^{m_r} \, m_r!} \,
 (j_n)_{*} \pi_* i^\sigma_* \Big( l'(\sigma_1)^{\times m_1} \times l'(\sigma_2)^{\times m_2} \times\cdots \Big)\,,
\end{aligned}
\end{equation}
where $\sigma$ in the last two lines denotes~$\sigma_1^{m_1}\sigma_2^{m_2}\cdots$.  Let us determine the
pushforward of the class 
$l'(\sigma, M^n)=l'(\sigma_1)^{\times m_1} \times\cdots \times l'(\sigma_n)^{\times m_n}$ to~$H_*(\Xinfty)$, induced by the map
  \[
g_{\sigma} \colon  M^k \cong F_{\sigma} \xhookrightarrow{~i^\sigma~} M^n  \xrightarrow{~\pi~} \Xn \xhookrightarrow{~j_n~} \Minfty
  \]
  \[
  g_{\sigma}(x_1,\ldots, x_k)=(\underbrace{x_1, \ldots, x_1}_{r_1}, \ldots, \underbrace{x_k, \ldots, x_k}_{r_k}, *, *, \ldots)\thin,
  \]
where $k$ ($=m_1+m_2+\cdots$) denotes the number of cycles of~$\sigma$ as before.
We note that $g_{\sigma}$ can be written as a composite
  \[
  M^k \; \xrightarrow{\Delta_{r_1} \times\cdots \times \Delta_{r_k}} \; (\Minfty)^k \; \xrightarrow{m^{(k)}} \; \Xinfty,
  \]
where $m^{(k)}$ is the $k$-th iterate of the multiplication map~\eqref{multiplication map on Xinfty}.
Hence, using the naturality of the cross product and the definition of the Pontryagin product, we get
\begin{equation} \label{eq: map g_sigma}
    \begin{aligned}
    (g_{\sigma})_* \big( l'(\sigma, M^n) \big) & \;= \;m^{(k)}_{\, *} \big( (\Delta_{r_1})_*(l'(\sigma_{r_1})) \times\cdots \times (\Delta_{r_k})_*(l'(\sigma_{r_k})) \big)\\
    &\; =\; (\Delta_{r_1})_* (l'(\sigma_{r_1})) \cdot \ldots \cdot (\Delta_{r_k})_*(l'(\sigma_{r_k})).
   \end{aligned}
\end{equation}
Let us denote
  \begin{equation} \label{eq: lambda definition}
   \lambda_r \, \coloneqq \;  (\Delta_{r})_*(l'(\sigma_r)) \ \in \ H_*(\Xinfty).   
  \end{equation}
Equation~\eqref{Eq: generating series reduction 1} combined with~\eqref{eq: map g_sigma} further simplifies the generating series:
\begin{align}
    \sum_{n=0}^{\infty} (j_n)_* l(\Xn) \cdot t^n \ &= 
\sum_{(m_1, m_2, \ldots)} \ \prod_{r \geq 1} \frac{t^{rm_r}}{r^{m_r} \, m_r!} \, \lambda_r^{m_r}\ =\ \prod_{r \geq 1} \bigg( \sum_{m_r \geq 0} \frac{1}{m_r!} \, \Big( \frac{\lambda_r t^r}{r} \Big)^{m_r} \bigg) \nonumber\\[5pt]
&=\ \prod_{r \geq 1} \exp \bigg( {\frac{\lambda_r t^r}{r}} \bigg) \ =\ \exp \bigg( \, {\sum_{r \geq 1}\frac{\lambda_r t^r}{r}} \bigg). \label{eq: final reduction of gen series}
\end{align}
It remains only to compute the classes $\lambda_r$, which we will do in the next section.

\section{Computation for a cycle} \label{Sect4}
The goal of this section is to establish the following computation for a permutation consisting of a single cycle,
which will be the final ingredient in the proof of Theorem~\ref{main theorem: homological formula for l classes}.

\begin{proposition} \label{prop: formula for lambda} The class $\lambda_r$ defined by equation~\eqref{eq: lambda definition} is given by 
  \[
  \lambda_r \ = \ \begin{cases}     \mr \big( l(M) \big) & \text{if r is odd,}\\   e(M) \cdot 1  & \text{if r is even} \end{cases}
  \]
  with $\mu_r$ as in~\eqref{eq: mu_r definition}.
\end{proposition}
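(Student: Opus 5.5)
The plan is to compute $l'(\sigma_r)=L'(\sigma_r,M^r)\cap[M]$ explicitly in terms of the Pontryagin roots of $M$, then push it forward along $\Delta_r$. The first step is to identify the normal bundle of the diagonal $F_{\sigma_r}\cong M$ in $M^r$ together with its $\sigma_r$-action. The normal bundle is $TM\otimes_\R(\R^r/\R\cdot(1,\dots,1))$, with $\sigma_r$ acting on the second factor by the cyclic shift. Complexifying and splitting into eigenspaces of the shift (eigenvalues $\zeta^k$, $\zeta=e^{2\pi i/r}$, $1\le k\le r-1$) gives the following. For each pair $\{k,r-k\}$ with $k<r/2$ there is a rotation bundle $N^{\sigma_r}_{\theta}$ with $\theta=2\pi k/r$, and as a complex bundle it is $TM\otimes_\R\C$. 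When $r$ is even there is in addition $N^{\sigma_r}_\pi\cong TM$ (the eigenvalue $-1$). If $\pm x_1,\dots,\pm x_s$ are the Chern roots of $TM\otimes\C$ (so the $x_j^2$ are the Pontryagin roots of $M$), then
\[
L_\theta(TM\otimes\C)=\prod_{j=1}^s \coth\Bigl(x_j+\frac{\pi i k}{r}\Bigr)\coth\Bigl(x_j-\frac{\pi i k}{r}\Bigr),
\]
where I used that $\coth$ is odd to rewrite $\coth(-x+ia)$ as $-\coth(x-ia)$; the two signs cancel.

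\emph{Odd $r$.} Here $L'(\sigma_r)=\prod_j x_j\coth(x_j)\prod_{k=1}^{r-1}\coth(x_j+\pi ik/r)$, because $\coth$ has period $\pi i$, so the factors for $k$ and $r-k$ together run over all nonzero residues. The key identity is
\[
\prod_{k=0}^{r-1}\coth\Bigl(x+\frac{\pi i k}{r}\Bigr)=\coth(rx)\qquad (r \text{ odd}).
\]
I would prove it by comparing zeros and poles as functions of $x$ (both sides have period $\pi i/r$) and checking the limit $x\to+\infty$. This gives
\[
L'(\sigma_r,M^r)=\prod_j \frac{x_j}{\tanh(rx_j)}=r^{-s}\prod_j\frac{rx_j}{\tanh(rx_j)}.
\]
So the component of $L'(\sigma_r,M^r)$ in $H^{4i}(M)$ is $r^{2i-s}L_i(M)$. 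Capping with $[M]$ sends this to $H_{2s-4i}(M)$ with factor $r^{-(2s-4i)/2}$; that is, $l'(\sigma_r)$ is $l(M)$ with its $H_{2m}$-component rescaled by $r^{-m}$. Since $(\Delta_r)_*$ preserves degree, this rescaling commutes with it and becomes $\psi_r$ on $H_*(\Minfty)$. Hence $\lambda_r=\psi_r(\Delta_r)_*l(M)=\mr(l(M))$.

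\emph{Even $r$.} Now the factor $L(M)\,L_\pi(TM)=e(TM)$ already lies in top degree $H^{2s}(M)$. So only the degree-zero parts of the remaining factors $L_\theta$ contribute. For each $k$ with $1\le k<r/2$ the degree-zero part is $\coth(\pi ik/r)^{2s}=(-1)^s\cot^{2s}(\pi k/r)$, and the product of the $\cot(\pi k/r)$ over $1\le k<r/2$ equals $1$ by the symmetry $k\leftrightarrow r/2-k$. Thus $l'(\sigma_r)=\pm\,e(TM)\cap[M]=\pm e(M)[*]$. Finally $(\Delta_r)_*[*]$ is the base-point class, which is the unit $1\in H_0(\Minfty)$, so $\lambda_r=e(M)\cdot1$ once the sign is settled.

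The main obstacle is bookkeeping rather than ideas. One must be sure which complex structure (i.e.\ the choice of $\theta$ versus $-\theta$) is put on each $N_\theta$, and check that the orientations of the normal summands, including the orientation of $N_\pi\cong TM$ used to define its Euler class, combine so that the leftover sign $(-1)^{s(r/2-1)}$ above cancels. I would first verify the normalisation in the degree-zero case against Hirzebruch's formula~\eqref{eq: gen series for signature}, and in the simplest case $M=S^2$, $r=2$, before trusting the general sign. The odd-$r$ product identity is the only genuinely analytic input, and I expect it to follow routinely from the zero/pole comparison.
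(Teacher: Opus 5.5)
Your overall route is the paper's: decompose $N\Delta=TM\otimes(\R^r/\R(1,\dots,1))$ into eigenbundles, use $\prod_{k=0}^{r-1}\coth(x+\pi ik/r)=\coth(rx)$ for odd $r$ and the top-degree Euler class for even $r$, then recognise the rescaling as $\psi_r$. The gap is in the sign bookkeeping. You flag it yourself as the main obstacle, but you leave it undone, and part of what you did write is wrong.

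In your displayed formula for $L_\theta(TM\otimes\C)$ there is only \emph{one} sign per $j$, namely $\coth(-x_j+ia)=-\coth(x_j-ia)$. So in fact $L_{\theta_k}(N_{\theta_k})=(-1)^s\prod_j\coth(x_j+\pi ik/r)\coth(x_j-\pi ik/r)$. (Your even-$r$ degree-zero computation $\coth(\pi ik/r)^{2s}$ silently uses the correct version.) Taken literally, your argument therefore gives $L'(\sigma_r,M^r)=(-1)^{s(r-1)/2}\prod_j x_j/\tanh(rx_j)$ for odd $r$, and $(-1)^{s(r/2-1)}e(TM)$ for even $r$. The odd case reaches the right formula only because this error is compensated by something you omitted, and the even case is left open.

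These signs are not cosmetic. For $s$ odd they flip $\lambda_r$ when $r\equiv 3 \pmod 4$, and they turn the factor $(1-t^2)^{-e(M)/2}$ into $(1+t^2)^{e(M)/2}$. Your proposed checks would not settle this. For $r=2$ there are no rotation blocks at all. The signature formula sees only degree zero, where the odd-$r$ sign is invisible because $\Sign(M)=0$ when $s$ is odd. It does see the even-$r$ sign (e.g.\ $M=S^2$, $r=4$), but an example calibrates the sign rather than proving it.

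The missing ingredient is the orientation convention in the Atiyah--Singer/Zagier formula. There, $\Delta$ and $N_\pi$ are oriented so that $T\Delta\oplus N_\pi\oplus\bigoplus_k N_{\theta_k}$, with the complex orientation on each $N_{\theta_k}$, is the orientation of $M^r$. The class $l'(\sigma_r)$ is the cap product with \emph{that} fundamental class, not with $[M]$.

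To compute the discrepancy, use two facts.
\begin{itemize}
\item Since $\dim M$ is even, the product orientation of $T(M^r)|_\Delta=TM\otimes\R^r$ is the block orientation $\bigoplus_w TM\otimes w$ for \emph{any} basis $w$ of $\R^r$. In particular you may take a basis adapted to $\R(1,\dots,1)\oplus W_\pi\oplus\bigoplus_k W_k$.
\item On $TM\otimes W_k\cong TM\otimes_\R\C$, the complex orientation differs from the block orientation $TM\oplus TM$ by the sign of the perfect shuffle of $2s$ letters, $(-1)^{s(2s-1)}=(-1)^s$.
\end{itemize}
Your worry about $\theta$ versus $-\theta$ is moot, since $TM\otimes_\R\C$ is self-conjugate of even complex rank.

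Consequently, identifying $\Delta$ with $M$:
\begin{itemize}
\item For odd $r$, the Atiyah--Singer orientation of $\Delta$ is $(-1)^{s(r-1)/2}$ times that of $M$. This exactly cancels the $\coth$ signs.
\item For even $r$, $N_\pi\cong TM$ must carry $(-1)^{s(r/2-1)}$ times its own orientation, so $e(N_\pi)=(-1)^{s(r/2-1)}e(TM)$. This cancels $\prod_{1\le k<r/2}\coth(\pi ik/r)^{2s}=(-1)^{s(r/2-1)}$.
\end{itemize}
These are precisely the signs $(-1)^{s(r-1)/2}$ and $(-1)^{s(r/2-1)}$ in the paper's computation of $L'(\sigma_r,M^r)$. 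With them inserted, the rest of your argument (the rescaling by $\psi_r$ and $(\Delta_r)_*[*]=1$) goes through as written.
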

For this purpose, we recall the formula for the $L'$-class of a cycle from~\cite{ZagierThesis}. We sketch the proof here for 
completeness while referring to~\cite[Section 9]{ZagierThesis} for full details.
\begin{proposition} \label{lemma: L' of a single cycle}
The $L'$-class of the cyclic permutation on $M^r$ is given by
\[
  L'(\sigma_r, M^r) \ = \ \begin{cases}
  \prod_{j=1}^s\, \frac{x_j}{\tanh(rx_j)} & \text{if r is odd,}\\
  e(TM) & \text{if r is even},
\end{cases}
\]
where the $x_j$ are the formal Pontryagin roots of degree $2$  defined by
 $ p(TM) \ =\ \prod_{j=1}^s\big( 1+x_j^2 \big)$.  
\end{proposition}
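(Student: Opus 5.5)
The plan is to compute the three factors of~\eqref{eq: three factors of L'} explicitly for $\sigma_r$ acting on $M^r$. The fixed-point set $F=F_{\sigma_r}$ is the diagonal, which we identify with $M$.

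\emph{Step 1: the normal bundle.} The normal bundle of the diagonal is $N\cong TM\otimes_\R V$, where $V=\R^r/\R\cdot(1,\dots,1)$ is the reduced regular representation of $\Z/r$, with $\sigma_r$ acting by cyclic shift. Over $\C$, $V$ splits into the eigenlines with eigenvalues $\zeta^k$, $\zeta=e^{2\pi i/r}$, $k=1,\dots,r-1$. Pairing $k$ with $r-k$ gives the real decomposition.
\begin{itemize}
\item For each $1\le k<r/2$ there is a summand $N_{\theta_k}\cong TM\otimes_\R\C$, with $\theta_k=2\pi k/r$. It carries the complex structure on which $\sigma_r$ acts by $e^{i\theta_k}$, and its Chern roots are $\pm x_j$, $j=1,\dots,s$.
\item If $r$ is even there is in addition $N_\pi\cong TM$, coming from $k=r/2$.
\end{itemize}
One must check that the orientation and complex-structure conventions agree with those used in defining $L_\theta$. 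This is where the signs below come from.

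\emph{Step 2: odd $r$.} Only complex summands occur, so
\[
L'(\sigma_r,M^r)=\prod_{j=1}^s\frac{x_j}{\tanh x_j}\prod_{1\le k<r/2}\coth\Bigl(x_j+\frac{i\pi k}{r}\Bigr)\coth\Bigl(-x_j+\frac{i\pi k}{r}\Bigr).
\]
Since $\coth$ has period $i\pi$ and is odd, $\coth(-x+i\pi k/r)=-\coth(x+i\pi(r-k)/r)$. Hence the inner product over $k$ equals $(-1)^{(r-1)/2}\prod_{k=1}^{r-1}\coth(x_j+i\pi k/r)$. I would then prove the classical identity
\[
\prod_{k=0}^{r-1}\tanh\Bigl(x+\frac{i\pi k}{r}\Bigr)=(-1)^{(r-1)/2}\tanh(rx)\qquad(r\text{ odd}).
\]
One way is to compare zeros and poles in $x$: both sides have simple zeros at $i\pi\Z/r$ and simple poles at $i\pi/2+i\pi\Z/r$, and both are $i\pi$-periodic. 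Then fix the constant by looking at $x\to+\infty$. Alternatively, write $\tanh x=(e^{2x}-1)/(e^{2x}+1)$ and use $\prod_k(y-\zeta^k)=y^r-1$.

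Using this identity, the factor for each root becomes $\frac{x_j}{\tanh x_j}\cdot\frac{\tanh x_j}{\tanh(rx_j)}=\frac{x_j}{\tanh(rx_j)}$, and the two signs $(-1)^{(r-1)/2}$ cancel.

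\emph{Step 3: even $r$.} Now the factor $L(F)\,L_\pi(N_\pi)=L(TM)\,e(TM)\,L(TM)^{-1}=e(TM)$ appears. This class lies in the top degree $2s$ of $H^*(M)$. Consequently only the degree-zero term of the remaining product $\prod_{1\le k<r/2}L_{\theta_k}(N_{\theta_k})$ contributes. That term is
\[
\prod_{1\le k<r/2}\bigl(\coth(i\pi k/r)\coth(-i\pi k/r)\bigr)^s=\prod_{1\le k<r/2}\cot^{2s}(\pi k/r),
\]
using $\coth(iy)=-i\cot y$. It equals $1$ because the substitution $k\mapsto r/2-k$ exchanges $\cot$ and $\tan$. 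Hence $L'(\sigma_r,M^r)=e(TM)$.

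\emph{Main obstacle.} I expect the main difficulty to be bookkeeping rather than conceptual. One has to pin down the complex structure on each $N_{\theta}$, the orientation of $F$ relative to the product orientation, and the resulting signs, so that the sign from pairing $k$ with $r-k$ exactly cancels the sign in the $\tanh$ product identity. For this I would check the case $r=3$, $s=1$ by hand to fix the conventions, and then carry out the general computation.
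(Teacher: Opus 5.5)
\textbf{There is a genuine sign gap.} Your route is the paper's: decompose $N\cong TM\otimes_\R V$, pair $k$ with $r-k$, use a product formula for $\coth$, and use $L(F)L_\pi(N_\pi)=e$ when $r$ is even. However, the sign bookkeeping is wrong, and you reach the right formula only because two errors cancel in each case.

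\textbf{Odd $r$.} The identity you quote is false for $r\equiv 3 \pmod 4$. The correct statement, which is the one the paper uses, is $\prod_{k=0}^{r-1}\tanh(x+i\pi k/r)=\tanh(rx)$ for odd $r$, with no sign. Both sides tend to $1$ as $x\to+\infty$, which is exactly the normalization test you propose. For $r=3$ one can also check directly: $\tanh(i\pi/3)\tanh(2i\pi/3)=(i\sqrt3)(-i\sqrt3)=+3$. Consequently your Step 2 expression actually equals $(-1)^{s(r-1)/2}\prod_j x_j/\tanh(rx_j)$. For $s=1$, $r=3$ its constant term is $\coth(i\pi/3)^2=-1/3$ instead of $r^{-s}=1/3$.

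\textbf{Even $r$.} The constant term of $L_\theta(N_\theta)$ is obtained by setting all $2s$ Chern roots $\pm x_j$ equal to $0$. This gives $\coth(i\theta/2)^{2s}=(-1)^s\cot^{2s}(\theta/2)$, not $\bigl(\coth(i\theta/2)\coth(-i\theta/2)\bigr)^s=\cot^{2s}(\theta/2)$. So your Step 3 as written yields $(-1)^{s(r/2-1)}e(TM)$.

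\textbf{The missing ingredient: the orientation sign.} The paper inserts this sign explicitly. In the $G$-signature formula, $F$ must be oriented so that $TF\oplus N$, with each $N_\theta$ carrying its complex orientation, gives the orientation of $M^r$. The product orientation of $M^r$ induces on $TM\otimes_\R W$ the orientation of $TM^{\oplus\dim W}$; this is independent of the basis of $W$ because $\dim TM=2s$ is even. The complex orientation of $N_\theta\cong TM\otimes_\R\C$ differs from the orientation of $TM\oplus TM$ by $(-1)^{2s(2s-1)/2}=(-1)^s$.

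Relative to $F\cong M$ with the orientation of $M$, this produces an extra factor $(-1)^{s(r-1)/2}$ for odd $r$. For even $r$ the factor is $(-1)^{s(r/2-1)}$, which the paper absorbs into the orientation of $N_\pi$, i.e.\ into $e(N_\pi)$. With this factor, and with the correct identity and constant term, the genuine signs from the pairing (odd case) and from the constant term (even case) cancel, and both formulas of the proposition follow.

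So the issue is not bookkeeping that works out automatically: as written, your argument rests on a false identity and omits a sign that is actually needed. The hand check you propose for $r=3$, $s=1$ would have exposed this.
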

\noindent Note that here we have to specify the number of formal roots since the power series $\frac x{\tanh rx}$ 
does not start with~1.  Thus the degree~0 component of $L'(\sigma_r, M^r)$ is $r^{-s}$.
\begin{proof}
The fixed points of the $\sigma_r$-action on $M^r$ are given by the diagonal points, which we denote 
in this section by $\Delta \coloneqq \{(x,\ldots, x) |~x \in M \} \subset M^r$. 
The tangent and the normal bundle of the diagonal submanifold $\Delta$ are given as follows
 \[
T \Delta = TM \otimes \R (1, \ldots, 1),~~ N \Delta = TM \otimes (\R^r/\R(1,\ldots, 1)),
 \]
 where $\sigma_r$ acts by the permutation of coordinates on the second factor.  
 Note that for $r$ odd the action of $\sigma_r$ cannot have $(-1)$ as an eigenvalue. As for the remaining eigenvalues
of the $\sigma_r$-action on $\R^r/\R(1,\ldots, 1)$, they are given by all the non-trivial $r$-th roots of unity, and 
hence the corresponding real blocks have rotation angles given by $\theta_j \coloneqq 2\pi j/r$ for 
$j \in \{1, \ldots, (r-1)/2 \}$. Let $x_j$ be the Chern roots of the complexified tangent bundle of~$M$. Note 
that in this case the Chern roots come in pairs $(x_j, -x_j)$ and each $N_{\theta_i}$ can be viewed as a complexification of~$TM$.
 Hence, using the defining equation~\eqref{eq: three factors of L'} for~$L'$, we have that for $r$ odd 
\[
  L'(\sigma_r, M^r) \ = \ (-1)^{s(r-1)/2} \ L(\Delta) \prod_{k=1}^{(r-1)/2} L_{\theta_k}(N_{\theta_k}), 
 \]
where the sign accounts for the difference between the orientation of the fixed-point set $\Delta$ coming from
the Atiyah-Singer recipe and the one obtained by identifying $\Delta$ with~$M$. Therefore,
  \begin{align}
  L'(\sigma_r, M^r) \ \nonumber
  &= \ L(TM) \prod_{k=1}^{(r-1)/2} \prod_{j=1}^{s} \Bigl(-\thin\coth(x_j+i\pi k/r) \, \coth(-x_j+i\pi k/r)\Bigr)\\ \nonumber
  &= \ L(TM) \prod_{j=1}^{s} \bigg( \prod_{k=1}^{r-1} \coth(x_j +i\pi k/r) \bigg) \nonumber \\ 
  &= \ \bigg( \prod_{j=1}^s \frac{x_j}{\tanh(x_j)} \bigg) \bigg( \prod_{j=1}^{s} \frac{\coth(rx_j)}{\coth(x_j)} \bigg) \ = \  \prod_{j=1}^s \frac{x_j}{\tanh(rx_j)}\, ,   \label{eq: formula for product of coth}
  \end{align}
where in the passage to equation~\eqref{eq: formula for product of coth} we used the standard identity
   \[
  \prod_{k=0}^{r-1} \coth(z + \frac{i \pi k}{r}) \ = \ \begin{cases}
      \coth(rz) &\textup{if $r$ is odd,}\\
      1 & \textup{if $r$ is even}
  \end{cases}
   \]
for the hyperbolic cotangent and the expression for the $L$-class of the tangent bundle $L(TM)=\prod_{j=1}^s \frac{x_j}{\tanh(x_j)}$.

If $r$ is even, then there is also a $(-1)$-eigenspace in the normal bundle decomposition, contributing the $\pi$-factor
  \[
  L_{\pi}(N_{\pi})\ = \ (-1)^{s(r/2-1)} \; e(TM) \; L(TM)^{-1},
  \]
which cancels the $L(\Delta)$ factor in the defining equation~\eqref{eq: three factors of L'}, leaving
 \[
  L'(\sigma_r, M^r) \ = \ (-1)^{s(r/2-1)} \; e(TM) \prod_{k=1}^{r/2-1} L_{\theta_k}(N_{\theta_k}) \ = \ e(TM). \qedhere
 \]
\end{proof}

\begin{proof}[Proof of Proposition~\ref{prop: formula for lambda}]
Recall that 

\[
\lambda_r \; = \; (\Delta_{r})_* \big(l'(\sigma_r) \big)=(\Delta_{r})_* \big( L'(\sigma_r, M^r) \cap [M] \big)  \ \in \ H_*(\Minfty). 
\]
By Proposition~\ref{lemma: L' of a single cycle}, for $r$ odd
\[
L'(\sigma_r, M^r)\; = \; \prod_{j=1}^{s} \frac{x_j}{\tanh(rx_j)} \; =\; r^{-s} \, \prod_{j=1}^{s} \frac{rx_j}{\tanh(rx_j)} \; =\; r^{-s} \, \psi_r(L(M)),
\] 
where $\psi_r \colon H^{\ev}(M) \to H^{\ev}(M)$ is a degree-preserving map that rescales the elements in degree $2j$ by $r^j$. 
Let us look at the degree $2j$ component of $\lambda_r$:
  \[
 \big( r^{-s} \, \psi_r (L(M)) \cap [M] \big)_{2j} \; = \; r^{-s} \, r^{s-j} \, L_{2(s-j)}(M) \cap [M]\; =\;r^{-j} \, l(M)_{2j},  
 \]
 which is exactly the desired rescaling from the definition~\eqref{eq: mu_r definition} of $\mr$. Hence, for odd $r$, we have
 \[
 \lambda_r \; =\; \mr(l(M)).
 \]
 For even $r$, $\lambda_r \, =\, (\Delta_r)_* \big( e(TM) \cap [M] \big) \,= \, e(M) \cdot 1$, which finishes the proof.
\end{proof}
Finally, we substitute the formula of Proposition~\ref{prop: formula for lambda} into the expression for the generating series~\eqref{eq: final reduction of gen series} and split by parity, using 
\[
\sum_{r~\textup{even}} \frac{t^r}{r} \; =\; -\frac{1}{2} \log(1-t^2),
\]
to obtain Theorem~\ref{main theorem: homological formula for l classes} as stated in the introduction.

\section{From homology back to cohomology} \label{Sect5}

The goal of this section is to give a short proof of Theorem~\ref{thmB} using the generating series for homological $l$-classes formula from Theorem~\ref{main theorem: homological formula for l classes}. 

Recall that the capping operation $D=\eta \cap (-)$ is a derivation by Proposition~\ref{prop: D is a derivation}, where  $\eta \in H^{2s}(\Minfty)$ is the special class defined in~\eqref{def: eta}. We also denote by the same letter $D$ its coefficientwise extension to the ring of formal power series $H_*(\Xinfty)[[t]]$. Denote by $\E$ the right-hand side of the generating series for $l$-classes in Theorem~\ref{main theorem: homological formula for l classes}:
 \begin{equation} \label{eq: def of E}
 \E \ \coloneqq \ \Big( \frac{1}{1-t^2}\Big) ^{e(M)/2}  e^{\Psi(l(M))} \ \in \ H_*(\Minfty)[[t]].
 \end{equation} 
\begin{proposition} \label{prop: diff equation for the RHS}
The element $\E$ satisfies the first order differential equation
    \[
    D(\E) \= g_s(t)  \, \E,
    \]
where $g_s(t)$ is the power series defined in~\eqref{Defgs}.    
\end{proposition}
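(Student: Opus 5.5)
The plan is to exploit that $D=\eta\cap(-)$ is a derivation (Proposition~\ref{prop: D is a derivation}) and to reduce the claim to a computation of $D\bigl(\Psi(l(M))\bigr)$. Since $D$ lowers degree by $2s>0$, it kills the degree-zero scalar factor $(1-t^2)^{-e(M)/2}$. Because $D$ is a derivation, it therefore suffices to show
\[
D\bigl(e^{\Psi(l(M))}\bigr) \= D\bigl(\Psi(l(M))\bigr)\, e^{\Psi(l(M))}
\qquad\text{and}\qquad
D\bigl(\Psi(l(M))\bigr) \= g_s(t)\cdot 1 .
\]
For the first identity, note that $\Psi(l(M))$ lies in $H_{\ev}(\Minfty)[[t]]$, which is a genuinely commutative subalgebra. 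There $D(X^n)=nX^{n-1}D(X)$ follows by induction from the Leibniz rule. Applying this termwise to the exponential series is legitimate: in each fixed homological degree and each fixed power of $t$, only finitely many terms contribute, since $\Psi$ has no constant term in~$t$.

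For the second identity, I will compute $D(\mr(x))$ for $x\in H_{\ev}(M)$ in two steps.

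\emph{Step 1: commute $D$ past $\psi_r$.} If $y$ is homogeneous of degree $2i$, then $D\psi_r y = r^{-i}Dy$, while $\psi_r D y = r^{-(i-s)}Dy$. Hence $D\circ\psi_r = r^{-s}\,\psi_r\circ D$.

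\emph{Step 2: compute $D$ on $(\Delta_r)_*$.} By naturality of the cap product,
\[
\eta\cap(\Delta_r)_*x \= (\Delta_r)_*\bigl(\Delta_r^*\eta\cap x\bigr).
\]
To identify $\Delta_r^*\eta$, I write $\Delta_r=j_r\circ q_r\circ\operatorname{diag}_r$. The class $j_r^*\eta$ pulls back under $q_r$ to $\sum_{k=1}^r\pi_k^*(f_0)$. Restricting this to the diagonal gives $r f_0$, so $\Delta_r^*\eta=r\,f_0$. Combining the two steps,
\[
D\bigl(\mr(x)\bigr) \= r^{1-s}\,\psi_r(\Delta_r)_*(f_0\cap x).
\]

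Now take $x=l(M)$. Since $f_0$ has top degree, $f_0\cap l(M)$ only sees the degree-$2s$ component of $l(M)$. That component is $[M]$, because $L(M)$ has constant term~$1$. Thus $f_0\cap l(M)=[*]$, the point class, which $(\Delta_r)_*$ sends to the unit $1$, and $\psi_r$ fixes degree zero. Hence $D\bigl(\mr(l(M))\bigr)=r^{1-s}\cdot 1$. Summing over odd $r$ with weights $t^r/r$ yields $\sum_{r\text{ odd}} t^r/r^s=g_s(t)$, as required.

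The main care point is the identification $\Delta_r^*\eta=r f_0$. It rests on the description $\eta=\pi^*(f_0)$, i.e.\ $j_r^*\eta=\bigl[\sum_k\pi_k^*f_0\bigr]$ via $H^*(M(r))\cong H^*(M^r)^{S_r}$, together with $\operatorname{diag}_r^*\pi_k^*=\mathrm{id}$. Everything else is bookkeeping with degrees and signs, and all signs are trivial since only even-degree classes appear.
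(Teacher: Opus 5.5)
Your proposal is correct and follows essentially the same route as the paper. You kill the degree-zero factor, compute $\Delta_r^*\eta = r f_0$ via the projection formula so that only the top component $[M]$ of $l(M)$ contributes $r^{1-s}$, and sum over odd $r$ to get $g_s(t)$. Your commutation rule $D\circ\psi_r = r^{-s}\,\psi_r\circ D$ and your justification of $D(e^X)=D(X)\,e^X$ on the commutative even-degree part are simply more explicit versions of bookkeeping the paper leaves implicit.
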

\begin{proof} Rewrite $\E$ as $\E \ = \ e^{ \mathcal{C} + \T }$, with $\mathcal{C}=-\frac{e(M)}{2}\log(1-t^2)$ and $\T= \Psi(l(M))$ and observe that~$\mathcal{C}$ has homological degree zero. Since $D$ lowers the degree by $2s$, we have
	\[D (\mathcal{C})=0.\]
On the other hand, the pullback of $\eta$ to $H^*(M^r)$ is $\sum_{i=1}^{r} \pi^*_i (f_0)$, where $\pi_i \colon M^r \to M$ denotes the projection onto the $i$-th factor. Hence
  \begin{equation} \label{eq: delta of eta}
\Delta_r^* (\eta) \= \Delta_r^* \Big( \sum_{i=1}^{r} \pi^*_i (f_0) \Big)=r f_0.
  \end{equation}
For odd $r$, again using that $D$ lowers the degree by $2s$, we note that only the top component of $\mu_r  (l(M))$, namely $\mu_r [M]=r^{-s} (\Delta_r)_*[M]$,  survives under $D$. Next, using the naturality of the cap product, $\eta \cap (\Delta_r)_* \gamma = (\Delta_r)_* (\Delta^*_r \eta \cap \gamma)$, together with equation~\eqref{eq: delta of eta}, we get
 \begin{align*}
D \mu_r(l(M)) \ &= \ r^{-s} D \big( (\Delta_r)_* [M] \big)\
= \ r^{-s}(\Delta_r)_* \big( \Delta_r^* \eta \cap [M] \big)\ \\
&= \ r^{-s} (\Delta_r)_* \big(rf_0 \cap[M] \big)\
= \ r^{1-s}.
  \end{align*}   
Hence
   \[
D (\T) \= D  \Big( \sum_{\substack{r\geq 1\\ r  \mathrm{~odd}}} \mu_r(l(M)) \cdot \frac{t^r}{r} \Big) \= \sum_{\substack{r\geq 1\\ r  \mathrm{~odd}}} \frac{t^r}{r^s} \= g_s(t),
   \]
and therefore
   \[
  D(\E) \= \E \cdot D(\T) \= g_s(t) \, \E. \qedhere
   \]
\end{proof}
By Proposition~\ref{prop: diff equation for the RHS} we have 
\[
D^k (\E) \= g_s(t)^k \, \E
\]
for all $k \geq 1$, since the homological degree of $g_s(t)$ is zero. Recalling that $f_s(t)$ is the inverse power series 
of $g_s(t)$, we obtain
  \begin{equation} \label{eq: final dif equation}
f_s(\eta) \cap \E \= f_s(D) \, \E \ =\ f_s(g_s(t)) \, \E \ = \ t \, \E.
  \end{equation}
Let us cap both sides of the formula in Theorem~\ref{main theorem: homological formula for l classes} with~$f_s(\eta)$ 
and compare the coefficients of~$t^{n+1}$. On the left-hand side, the capping being coefficientwise, this coefficient is 
$f_s(\eta) \cap (j_{n+1})_*l(M(n+1))$. At the same time, by equation~\eqref{eq: final dif equation} it is also the coefficient
of~$t^n$ in the original generating series (before capping), hence
  \[
  f_s(\eta) \cap (j_{n+1})_* \, l(M(n+1)) \= (j_n)_* \, l(M(n)).
  \]
  By the naturality of the cap product and the injectivity of $(j_{n+1})_*$, this can be rewritten as an equation in~$H_*(M(n+1))$:
  \begin{equation} \label{eq: consecutive homological l-classes}
f_s(\eta_{n+1}) \cap l(M(n+1)) \= j_{n+1}^* f_s(\eta) \cap l(M(n+1)) \ =\ j_* l(M(n)),
  \end{equation}
  where $\eta_{n+1} \coloneqq j_{n+1}^* \eta$ and $j \coloneqq j_{n, n+1} \colon M(n) \to M(n+1)$ is the inclusion.
In particular, restricting equation~\eqref{eq: consecutive homological l-classes} to its top degree gives
  \begin{equation}
j_* [M(n)] \= \eta_{n+1} \cap [M(n+1)].
  \end{equation}
 We are now ready to conclude with our short proof of the cohomological formula from the earlier 
paper~\cite{ZagierThesis} (Section 8, Theorem 1). 
 \begin{thmB'} Let $M$ be a manifold of dimension $2s$. The cohomological $L$-classes of consecutive symmetric products of $M$ are related by the equation
     \begin{equation} \label{eqB'}
     j^* L( M(n+1)) \=  L(M(n))  \ j_n^* \, Q_s(\eta),
     \end{equation}
  where $j \coloneqq j_{n, n+1} \colon M(n) \to M(n+1)$ and $j_n \colon M(n) \to \Minfty$ are the inclusions.   
 \end{thmB'}
 \begin{proof}
  Let us cap each side with $[M(n)]$ and push forward along $j_*$ : 
 \begin{align*}
  j_* \big( j^* L(M(n+1)) \cap [M(n)] \big)\
  &= \ L(M(n+1)) \cap \big( \eta_{n+1} \cap [M(n+1)] \big)\\
  &=\ \eta_{n+1}  \cap \, l(M(n+1)),
   \end{align*}
   \begin{align*}
  j_* \big( L(M(n)) \ j_n^* Q_s(\eta) \cap [M(n)] \big) \ &=\  j_* \big( j_n^* Q_s(\eta) \cap l(M(n)) \big)\\
  &= \ j_{n+1}^* Q_s(\eta) \cap j_* l(M(n)) \\
   &= \ Q_s(\eta_{n+1}) \cap \big( f_s(\eta_{n+1}) \cap l(M(n+1)) \big)\\
     &=\eta_{n+1} \cap l(M(n+1)).
  \end{align*}
Since $j_*$ is injective and capping with $[M(n)]$ is an isomorphism, we conclude that
\[
j^* L( M(n+1))\ = \ L(M(n))  \ j_n^* Q_s(\eta). \qedhere
\]

 \end{proof}
 \begin{cor}[= Theorem B] \label{cor: eq L=GQ^n}
   There is an element $G \in H^*(\Xinfty)$ independent of $n$ such that 
  \begin{equation} \label{Eq: L=GQ^n}
    L(M(n))=j_n^* \big( G \, Q_s(\eta)^{n+1} \big).
   \end{equation}
 \end{cor}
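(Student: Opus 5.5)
The plan is to build $G$ as the stable limit of the normalized classes $L(M(n))\,j_n^*Q_s(\eta)^{-(n+1)}$. Theorem~B$'$ makes these classes compatible under restriction, and the surjectivity in Lemma~\ref{lemma: j_n injectivity on homology} lets them patch together in $H^*(\Minfty)$.

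First observe that $Q_s(t)=1+O(t^2)$ is invertible in $\Q[[t]]$. Since $\eta$ has positive degree $2s$, and $H^*(M(n))$ is finite-dimensional and vanishes above degree $2sn$, the class $j_n^*Q_s(\eta)$ is a well-defined unit in $H^*(M(n))$. Set
\[
G_n \,\coloneqq\, L(M(n))\; j_n^*\big(Q_s(\eta)^{-(n+1)}\big) \ \in \ H^*(M(n)).
\]

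Next check compatibility. Using $j_n=j_{n+1}\circ j$ with $j=j_{n,n+1}$, so that $j^*j_{n+1}^*=j_n^*$, together with Theorem~B$'$,
\[
j^*G_{n+1} \= j^*L(M(n+1))\; j_n^*Q_s(\eta)^{-(n+2)} \= L(M(n))\, j_n^*Q_s(\eta)\; j_n^*Q_s(\eta)^{-(n+2)} \= G_n .
\]
So $(G_n)_n$ is a compatible family for the inverse system $\cdots\to H^*(M(n+1))\to H^*(M(n))$.

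The remaining step, which I expect to be the only real point, is to produce a single class $G\in H^*(\Minfty)$ with $j_n^*G=G_n$ for all $n$. I would argue degree by degree. $\Minfty$ is a CW colimit of the $M(n)$, and each $H^k(M(n))$ is finite-dimensional with surjective restriction maps by Lemma~\ref{lemma: j_n injectivity on homology}. The $\varprojlim^1$ term therefore vanishes (Mittag-Leffler), and we get $H^k(\Minfty)\cong\varprojlim_n H^k(M(n))$.

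Alternatively, one can argue directly with the stable basis $\nf$. A class in degree $k$ only involves $\nf$ with $\sum_{i<b} n_i\deg f_i=k$, and since each $\deg f_i\ge 1$ for $i<b$, this forces $n_0+\cdots+n_{b-1}\le k$. Hence $j_n^*$ is an isomorphism $H^k(\Minfty)\to H^k(M(n))$ as soon as $n\ge k$. Each homogeneous component $G^{(k)}$ is then defined by $j_n^*G^{(k)}=G_n^{(k)}$ for any $n\ge k$, which is consistent by the compatibility above. Set $G=\sum_k G^{(k)}$; this is a product over degrees, allowed in the completed sense of $H^*(\Minfty)$ as power series in the $p_i$, just as $Q_s(\eta)$ itself is.

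Finally, $j_n^*G=G_n$ because restriction commutes with these degreewise definitions; for degrees $k>n$, compatibility through $H^k(M(k))$ gives the same conclusion. Multiplying by $j_n^*Q_s(\eta)^{n+1}$ yields $L(M(n))=j_n^*\big(G\,Q_s(\eta)^{n+1}\big)$, and $G$ depends on no $n$ by construction. As a sanity check on normalization, $n=0$ gives $j_0^*G=1$, consistent with $L(\mathrm{pt})=1$.
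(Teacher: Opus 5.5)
Your proposal is correct and is the same argument the paper intends: the paper states the corollary as an immediate consequence of Theorem~B$'$ and the stability of the restriction maps $j_{n,n+1}^*$ (Lemma~\ref{lemma: j_n injectivity on homology}), without writing out a proof. Your normalized classes $G_n=L(M(n))\,j_n^*Q_s(\eta)^{-(n+1)}$, the compatibility check, and the observation that $j_n^*$ is an isomorphism in degree $k$ once $n\ge k$ supply exactly those omitted details, including the remark that $G$ lives in the degree-completion of $H^*(\Minfty)$, just as $Q_s(\eta)$ does.
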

 
\begin{rem}
Equation~\eqref{eqB'} is reminiscent of the usual formula $L(\nu) L(A)=j^* L(B)$ for the normal bundle~$\nu$
of a submanifold $j\colon A \to B.$  This suggests that there should exist some notion of ``$s$-rational homology 
bundles'' over topological spaces, locally modeled in some way on inclusions of symmetric products of $(2s)$-dimensional
manifolds.  These bundles would have some kind of ``$s$-Pontryagin class" and  $L$-classes  given by the
multiplicative sequence associated with the power series $Q_s(t)$ instead of the usual $t/\tanh(t)$. This idea was 
already mentioned in the final paragraph of~\cite{zagier1972}, but we still do not know a reasonable geometric
definition. We hope to return to this later.
\end{rem}

\section{Closed formula for the factor $G$} \label{Sect6}
In this section we derive the closed formula for the factor $G$ from equation~\eqref{Eq: L=GQ^n} using the generating 
series formula for $l(M(n))$ of Theorem~\ref{main theorem: homological formula for l classes}. Let us rewrite the
 universal element $U$ defined in~\eqref{eq: def of U} separating out the two special summands, $i=0$, contributing $1$,
 and $i=b$, contributing $e_b \otimes \alpha$:
\begin{equation} \label{eq: def U truncated}
     U\=1 + e_b \otimes \alpha + \mathcal{U}, \qquad \mathcal{U} \coloneqq \sum_{i=1}^{b-1} e_i \otimes u_i.
 \end{equation}

 \begin{proposition} \label{prop: mu(l(X)) with alpha term separated}
 	Let $\Psi$ be the map defined in~\eqref{PsiSer}, which encodes the twists by
 	the Adams operations. Then
     \[
     \Psi\bigl(l(M)\bigr) \= \ g_s(t)\thin\alpha \+\sum_{i=0}^{s} \bigl\langle g_{i+1}\big(t(1 +\mathcal{U}) \big), l_i\bigr\rangle
     \]
 \end{proposition}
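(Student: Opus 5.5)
The plan is to compute $\mu_r(l_i)$ for each homogeneous component $l_i\in H_{2i}(M)$ using Lemma~\ref{lemma: Delta_* is U^r}, and then sum over odd $r$.

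By that lemma, $(\Delta_r)_*(l_i)=\langle U^r,l_i\rangle$. Write $U=(1+\mathcal{U})+e_b\otimes\alpha$. The summand $e_b\otimes\alpha$ has total degree $4s$ and even degree in each factor, so it commutes with everything. Moreover $e_b\cdot e_j=0$ for $j>0$ and $e_b^2=0$, because $e_b$ is of top degree. Hence
\[
U^r=(1+\mathcal{U})^r+r\,(e_b\otimes\alpha),
\]
since every other term in the binomial expansion involves $e_b$ multiplied by some $e_j$ with $j>0$, or by $e_b$ itself. Pairing with $l_i$ gives
\[
(\Delta_r)_*(l_i)=\langle (1+\mathcal{U})^r,l_i\rangle+r\,\langle e_b,l_i\rangle\,\alpha .
\]
Only $l_s=[M]$ pairs nontrivially with $e_b$, and $\langle e_b,[M]\rangle=1$.

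Next apply $\psi_r$. On $\langle (1+\mathcal{U})^r,l_i\rangle$, which is homogeneous of homological degree $2i$ (the pairing preserves total degree), $\psi_r$ multiplies by $r^{-i}$. On $r\alpha$, which has degree $2s$, it gives $r^{1-s}\alpha$. Multiplying by $t^r/r$ and summing over odd $r$, the $\alpha$-term yields $\sum_{r\ \mathrm{odd}} r^{-s}t^r\,\alpha=g_s(t)\,\alpha$. The remaining term for each $i$ is
\[
\sum_{r\ \mathrm{odd}}\frac{t^r}{r^{i+1}}\langle(1+\mathcal{U})^r,l_i\rangle=\Bigl\langle \sum_{r\ \mathrm{odd}}\frac{(t(1+\mathcal{U}))^r}{r^{i+1}},\,l_i\Bigr\rangle=\bigl\langle g_{i+1}(t(1+\mathcal{U})),l_i\bigr\rangle .
\]
This step uses that evaluation against $l_i$ is linear and can be exchanged with the formal power series in $t$. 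It also uses that $1+\mathcal{U}$ lies in a commutative ring (the even part), so $(t(1+\mathcal{U}))^r=t^r(1+\mathcal{U})^r$. Summing over $i$ gives the claimed formula. Only the $l_i$ with $2i\le 2s$ occur, and odd-degree components of $l(M)$ vanish, so $i$ ranges from $0$ to $s$.

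The main obstacle is sign bookkeeping in the graded-commutative algebra $H^*(M)\otimes H_*(\Minfty)$. Summands $e_j\otimes u_j$ with $d_j$ odd are odd in each factor but even in total degree, so $\mathcal{U}$ is still even and central. With that, the binomial expansion above is legitimate. I would spell out two checks. First, $(e_b\otimes\alpha)(e_j\otimes u_j)=\pm\, e_be_j\otimes\alpha u_j=0$ for $j\ge1$. Second, the vanishing of $e_b^2$ kills the higher powers of $e_b\otimes\alpha$.
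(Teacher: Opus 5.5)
Your proof is correct and is essentially the paper's argument. Both write $U^r=(1+\mathcal{U})^r+r\,(e_b\otimes\alpha)$, which holds because $e_b$ kills every positive-degree class, then pair with $l_i$ using $(\Delta_r)_*(x)=\langle U^r,x\rangle$, rescale by $r^{-i}$ and sum over odd $r$. Your explicit check that $U$ and $\mathcal{U}$ have even total degree and are therefore central, which justifies the binomial expansion, is a detail the paper leaves implicit.
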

 \begin{proof}
     Note that $e_b \cup e_i=0$ for all $i \neq 0$ since $e_b$ is a top degree class of $H^*(M).$ Thus
     \[
     U^r=(e_b \otimes \alpha + (1+\U))^r \ = \ r e_b \otimes \alpha+(1+\U)^r.
     \]
  By \autoref{lemma: Delta_* is U^r}
  \begin{equation} \label{eq: expanding Delta}
  (\Delta_r)_*(l_i)=\langle r e_b \otimes \alpha + (1+\U)^r, l_i \rangle =r\alpha \delta_{s,i} + \langle(1+\U)^r, l_i \rangle.
  \end{equation}
 Multiplying~\eqref{eq: expanding Delta} by $t^r/r^{i+1}$ and summing over all $i\ge0$ and all odd $r$ gives the desired identity:
  \[
  \Psi(l(M))=\sum_{\substack{r\geq 1\\ r  \mathrm{~odd}}}\frac{t \alpha}{r^s} +\sum_{i=0}^{s} \sum_{\substack{r\geq 1\\ r  \mathrm{~odd}}} \frac{t^r}{r^{i+1}}\langle(1+\U)^r, l_i \rangle=g_s(t)\alpha+\sum_{i=0}^{s} \langle g_{i+1}\big(t(1 +\mathcal{U}) \big), l_i\rangle. \qedhere
  \]
 \end{proof}
Recall that the classes $u_i \in H_{d_i}(\Minfty)$ are defined by
\[
u_i \, \coloneqq \; (j_1)_*(e^*_i) \; =\; (j_1)_*(f_i \cap [M]),
\]
where the last equality follows from our definition of $f_i$'s as the cohomological basis Poincar{\'e} dual to the $e_i$'s. Recall also that the classes $p_i \= [1(f_i)] \ \in \ H^{2s-d_i} (\Minfty)$ are free (graded-commutative) generators of the algebra $H^*(\Minfty)$.
\begin{lemma} \label{lemma: Psi cap e^xa}
    For any power series $F=F(p_0, \ldots, p_{b-1})$ there is an identity
    \[
    \sum_{n \geq 0} \big( \Phi(F) \cap \frac{\alpha^n}{n!} \big) x^n=e^{x\alpha} F(x, xu_1, \ldots , xu_{b-1}),
    \]
    where $\Phi$ is the base-change map defined in~\eqref{BsChange}.
\end{lemma}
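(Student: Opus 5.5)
By linearity in $F$, with both sides understood degreewise as formal power series, it suffices to prove the identity for a single monomial $F=\bp^{\n}=p_0^{n_0}p_1^{n_1}\cdots p_{b-1}^{n_{b-1}}$. In that case $\Phi(F)=\nf$, and the right-hand side becomes
\[
e^{x\alpha}\,x^{n_0+\cdots+n_{b-1}}\,u_1^{n_1}\cdots u_{b-1}^{n_{b-1}}.
\]
Write $m=n_0+\cdots+n_{b-1}$. Comparing coefficients of $x^n$, the claim is
\begin{equation*}
\nf\cap\frac{\alpha^n}{n!}\=\frac{\alpha^{n-m}}{(n-m)!}\,u_1^{n_1}\cdots u_{b-1}^{n_{b-1}}
\end{equation*}
for $n\ge m$, and that the left side vanishes for $n<m$. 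The vanishing case is immediate: by Lemma~\ref{lem: [X(n)] in alpha}, $\alpha^n/n!=(j_n)_*[M(n)]$, and by naturality of the cap product the left side equals $(j_n)_*(j_n^*\nf\cap[M(n)])=(j_n)_*(\nf_n\cap[M(n)])$, where $\nf_n=0$ for $n<m$ by definition.

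For $n\ge m$ I will compute on $M^n$ and push forward. Since $q_n$ has degree $n!$ and $j_n\circ q_n=m^{(n)}\circ(j_1)^{\times n}$, we get
\[
\nf\cap\frac{\alpha^n}{n!}\=\frac{1}{n!}\,\bigl(m^{(n)}\circ j_1^{\times n}\bigr)_*\bigl(q_n^*\nf_n\cap[M]^{\times n}\bigr).
\]
Here $q_n^*\nf_n=\frac1{n_b!}\langle n_0(f_0)\ldots n_b(f_b)\rangle=\frac{1}{n_b!}\sum_{\sigma}\sigma^*(v_1\times\cdots\times v_n)$, where the $v_k$ consist of $n_i$ copies of $f_i$ and $n_b=n-m$ copies of $f_b=1$. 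The cross-product formula gives
\[
\sigma^*(v_1\times\cdots\times v_n)\cap[M]^{\times n}
\]
as a (signed) permuted cross product of the classes $v_k\cap[M]$. For these we have $f_i\cap[M]=e_i^*$ and $1\cap[M]=[M]$. After pushing forward by $m^{(n)}\circ j_1^{\times n}$, each term becomes a Pontryagin product $\pm\,\alpha^{n-m}u_0^{n_0}u_1^{n_1}\cdots u_{b-1}^{n_{b-1}}$, where $u_0=[*]$ is the unit of $H_*(\Minfty)$.

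The point to check is that all $n!$ terms contribute with the same sign. The permutation sign from $\sigma^*$ on the cohomology cross product should cancel against the graded-commutativity sign needed to reorder the Pontryagin product, since reordering the $v_k$ and reordering the $v_k\cap[M]$ involve the same parities, because $\deg(f_i\cap[M])\equiv\deg f_i \pmod 2$ when $2s$ is even. Granting this, the sum equals $n!\,\alpha^{n-m}u_1^{n_1}\cdots u_{b-1}^{n_{b-1}}$. Dividing by $n!\,n_b!$ gives the claimed coefficient. The $x^n$ bookkeeping then works out: the right-hand coefficient is $\alpha^{n-m}/(n-m)!$ times the $u$-monomial, and $n_b=n-m$.

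I expect the main obstacle to be this sign verification for odd-degree classes, together with the Koszul sign in the cap-cross formula. I would handle it by first treating transpositions of adjacent factors and checking that the two signs agree. A cleaner alternative is to observe that both sides are compatible with the coproduct, because capping with primitive generators acts by derivations, as in Proposition~\ref{prop: D is a derivation}. That reduces the sign issue to a single generator, where $p_i\cap\alpha^n/n!=u_i\,\alpha^{n-1}/(n-1)!$ follows by applying the derivation rule to $\alpha^n$ together with $p_i\cap\alpha=u_i$ (since $j_1^*p_i=f_i$). However, this route still requires relating the cap with $\Phi(\bp^{\n})$ to iterated caps, so I would keep the direct symmetrization computation as the main argument.
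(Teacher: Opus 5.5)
Your proposal is correct and follows the paper's route: reduce to a monomial $\bp^{\n}$, identify $\nf\cap\frac{\alpha^n}{n!}$ with $\frac{\alpha^{n-m}}{(n-m)!}u_1^{n_1}\cdots u_{b-1}^{n_{b-1}}$ (and with $0$ for $n<m$), and sum against $x^n$; the paper only asserts this cap-product value, and your transfer to $M^n$ supplies the justification. Your sign argument is right, and the cap--cross sign you were unsure about is trivial: with the Koszul pairing convention the paper uses, $(a\times b)\cap(x\times y)=(-1)^{|b||x|}(a\cap x)\times(b\cap y)$, which is $+1$ here because every factor $x=[M]$ has even degree.
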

  \begin{proof}
      It is enough to check the formula for $F$ a monomial $p_0^{n_0} p_1^{n_1} \ldots p_{b-1}^{n_{b-1}}$.
      We have 
      \[
      \Phi(F) \cap \frac{\alpha^n}{n!}=[n_0(f_0)\ldots n_{b-1}(f_{b-1})] \cap \frac{\alpha^n}{n!}
      =\frac{\alpha^{n -(n_0 + \ldots + n_{b-1})}}{( n-(n_0 + \ldots n_{b-1}) )!} u_1^{n_1}  \ldots  u_{b-1}^{n_{b-1}}
      \]
      if $n_0 + \cdots +n_{b-1} \leq n$ and zero otherwise.
Hence
\begin{align*}
  \sum_{n \geq 0} \big( \nf \cap \frac{\alpha^n}{n!} \big) x^n &= \sum_{n \geq n_0 +\ldots + n_{b-1}} \Big( \frac{\alpha^{n -(n_0 + \ldots + n_{b-1})}}{( n-(n_0 + \ldots n_{b-1}) )!} u_1^{n_1}  \ldots  u_{b-1}^{n_{b-1}} \Big) x^n\\
  &=\sum_{n \geq n_0 +\ldots + n_{b-1}}  \frac{(x\alpha)^{n -(n_0 + \ldots + n_{b-1})}}{( n-(n_0 + \ldots n_{b-1}) )!} \ (x)^{n_0} (xu_1)^{n_1} \ldots  (xu_{b-1})^{n_{b-1}}\\
  \\
  &= e^{x\alpha}F(x,xu_1, \ldots, xu_{b-1} ). \qedhere
\end{align*}

  \end{proof}
  \begin{rem}
       The classes $u_1, \ldots, u_{b-1}, \alpha=u_b$ are free graded-commutative generators of the algebra $H_*(\Minfty)$ by \autoref{thm: free hopf algebra}, and likewise $\eta=p_0,p_1, \ldots, p_{b-1}$ are free graded-commutative generators of the algebra $H^*(\Minfty)$ by \autoref{thm: free cohomology algebra}. The left-hand side of the lemma determines the image of $F$ under the
substitution $p_0 \mapsto x$, $p_i \mapsto xu_i$
$(1\leq i\leq b-1)$, with
\[
  p_0^{n_0}p_1^{n_1}\cdots p_{b-1}^{n_{b-1}}
  \;\longmapsto\;
  x^{\,n_0+n_1+\dots+n_{b-1}}\,u_1^{n_1}\cdots u_{b-1}^{n_{b-1}} 
\]
which is clearly injective and hence determines the series $F=F(p_0, \ldots, p_{b-1})$ uniquely. 
 \end{rem}
Let 
\[
\p \coloneqq \sum_{i=1}^{b-1} e_i \otimes p_i \in \ H^*(M)\otimes H^*(\Minfty)
\]
be the cohomological analogue of the truncated universal element $\mathcal{U}$ defined in~\eqref{eq: def U truncated}. We now show the closed formula for the $G$-class which, together with Proposition~\ref{prop: explicit formula for H} below, constitutes the content of Theorem~\ref{thmC}.
\begin{theorem} \label{thm: main formula for G}
Let $f_s$ be the inverse power series of the odd polylogarithm series $g_s$ defined in~\eqref{Defgs}, let $l_i \in H_{2i}(M)$ denote the $i$-th component of the homological $l$-class of $M$, and set	
\[
H\, =\, \sum_{i=0}^{s} \Big\langle g_{i+1} \Big(\frac{f_s(\eta)}{\eta}(\eta + \p) \Big) , \, l_i \Big\rangle 
\ \in \ \Q[[\eta]][p_1, \ldots, p_{b-1}]. \]	
Then the class $G$ from equation~\eqref{Eq: L=GQ^n} is given by
\[
    G \,=\, \frac{f'_s(\eta)}{(1-f_s(\eta)^2)^{e(M)/2}} \; \Phi(e^H).
\]
\end{theorem}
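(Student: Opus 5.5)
We take Corollary~\ref{cor: eq L=GQ^n} as given and identify $G$ by converting the cohomological identity back into a homological generating series, which we then compare with Theorem~\ref{main theorem: homological formula for l classes}. Write $G=\Phi(F)$ for a unique power series $F=F(p_0,\dots,p_{b-1})$; this is possible because $\Phi$ is an automorphism. Capping $L(M(n))=j_n^*(G\,Q_s(\eta)^{n+1})$ with $[M(n)]$, pushing forward along $j_n$, and using naturality of the cap product together with Lemma~\ref{lem: [X(n)] in alpha} gives
\[
(j_n)_*\,l(M(n)) \= \bigl(G\,Q_s(\eta)^{n+1}\bigr)\cap \frac{\alpha^n}{n!}\, .
\]
By the $\Q[[\eta]]$-linearity of $\Phi$, we have $G\,Q_s(\eta)^{n+1}=\Phi\bigl(F\cdot Q_s(p_0)^{n+1}\bigr)$. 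Lemma~\ref{lemma: Psi cap e^xa} then identifies the right-hand side as the coefficient of $x^n$ in $e^{x\alpha}F(x,xu_1,\dots,xu_{b-1})\,Q_s(x)^{n+1}$. Consequently the left-hand side of \eqref{lSeries} equals
\[
\sum_{n\ge0} t^n\,[x^n]\Bigl(\varphi(x)\,Q_s(x)^{n+1}\Bigr),\qquad \varphi(x)\coloneqq e^{x\alpha}F(x,xu_1,\dots,xu_{b-1}).
\]

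The key step is Lagrange inversion. Since $Q_s(x)=x/f_s(x)$, the equation $w=t\,Q_s(w)$ is equivalent to $f_s(w)=t$, so its solution is $w=g_s(t)$. The Lagrange--Bürmann formula gives
\[
\sum_n t^n[x^n]\bigl(\varphi\,Q_s^{n+1}\bigr)=\frac{\varphi(w)\,Q_s(w)}{1-tQ_s'(w)}=\varphi(w)\,\frac{dw}{dt}=\frac{\varphi(g_s(t))}{f_s'(g_s(t))}.
\]
We apply this coefficientwise. The coefficient ring is the graded-commutative algebra $H_*(\Minfty)$, and every term involved has even total degree (as in the proof of Lemma~\ref{lemma: Delta_* is U^r}), so the formal argument goes through unchanged. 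I expect this to be the only delicate point, namely justifying Lagrange inversion with coefficients in $H_*(\Minfty)$ and checking that the $Q_s^{n+1}$ (rather than $Q_s^n$) version produces exactly the factor $dw/dt$. I would verify this first on the scalar case, i.e.\ the degree-$0$ part, which must reproduce Hirzebruch's formula~\eqref{eq: gen series for signature}.

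Next we equate with $\E$, rewritten via Proposition~\ref{prop: mu(l(X)) with alpha term separated}. This gives
\[
\frac{e^{g_s(t)\alpha}\,F\bigl(g_s(t),g_s(t)u_1,\dots\bigr)}{f_s'(g_s(t))}=(1-t^2)^{-e(M)/2}\,e^{g_s(t)\alpha}\exp\Bigl(\sum_{i=0}^{s}\bigl\langle g_{i+1}\bigl(t(1+\U)\bigr),l_i\bigr\rangle\Bigr).
\]
The factors $e^{g_s(t)\alpha}$ cancel, since everything lies in a commutative even subalgebra. We then substitute $t=f_s(w)$ and multiply by $f_s'(w)$. This yields $F(w,wu_1,\dots,wu_{b-1})$ as $f_s'(w)(1-f_s(w)^2)^{-e(M)/2}e^{(\cdot)}$, where the exponent is $\sum_i\langle g_{i+1}(f_s(w)(1+\U)),l_i\rangle$.

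Finally, under the substitution $p_0\mapsto w$, $p_i\mapsto wu_i$, the class $\frac{f_s(\eta)}{\eta}(\eta+\p)$ is sent to $f_s(w)(1+\U)$. Hence the right-hand side of the last identity is exactly the image of $f_s'(\eta)(1-f_s(\eta)^2)^{-e(M)/2}e^{H}$. By the injectivity of this substitution (the Remark after Lemma~\ref{lemma: Psi cap e^xa}), $F$ equals that series. Applying $\Phi$ and using its $\Q[[\eta]]$-linearity to pull out the $\eta$-prefactor then gives the stated formula for $G$.
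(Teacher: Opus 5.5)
Your proposal is correct and follows the paper's proof essentially step for step. You write $G=\Phi(F)$, push forward using $(j_n)_*[M(n)]=\alpha^n/n!$ and the lemma for $\sum_n(\Phi(F)\cap\alpha^n/n!)\,x^n$, and apply Lagrange inversion to get $\E(t)=g_s'(t)\,e^{g_s(t)\alpha}F(g_s(t),g_s(t)u_1,\dots)$. You then cancel $e^{g_s(t)\alpha}$ after separating out the $\alpha$-term of $\Psi(l(M))$, substitute $t=f_s(x)$, and invert the substitution $p_0\mapsto x$, $p_i\mapsto xu_i$.

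The only difference is cosmetic: you use the Lagrange--B\"urmann generating-function form, whereas the paper computes $[x^n](\varphi\,Q_s^{n+1})=\res_{x=0}\varphi(x)f_s(x)^{-n-1}\,dx$ by residues. The point you flag as delicate is harmless, because the identity is linear in $\varphi$ and $Q_s$ has scalar coefficients, so no commutativity of the coefficient ring is needed.
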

\begin{proof}
    Since $\Phi$ is a bijection we can choose $F$ in such a way that $G=\Phi(F)$. By Theorem~\ref{thmB}
    \begin{equation} \label{eq: L(X(n)) in theorem}
    L(M(n)) \; =\; j_n^* \big( G \, Q_s(\eta)^{n+1} \big).
   \end{equation}
   Capping equation~\eqref{eq: L(X(n)) in theorem} with $[M(n)]$ and pushing it forward along $(j_n)_*$ gives
   \[
   (j_n)_*l(M(n))= G Q_s(\eta)^{n+1} \cap (j_n)_*[M(n)]=G Q_s(\eta)^{n+1} \cap \frac{\alpha^n}{n!}
   \]
where the last equality is the result of \autoref{lem: [X(n)] in alpha}. From Theorem~\ref{main theorem: homological formula for l classes}, comparing the coefficients at $t^{n}$ and using that $G=\Phi(F)$ along with the linearity of $\Phi$ with respect to power series in $\eta$, we obtain
\begin{align*}
    [t^n] \, \E& \; =\; (Q_s^{n+1}(\eta) \, G) \cap \frac{\alpha^n}{n!} \; =\;\Phi \big( Q_s^{n+1}(\eta) \, F \big) \cap \frac{\alpha^n}{n!} \\
    &\; = \; [x^n] \, \big( e^{x\alpha} \, Q_s^{n+1}(x) \, F (x, xu_1, \ldots, xu_{b-1}) \big),
\end{align*}
where the last equality follows from \autoref{lemma: Psi cap e^xa}.
Using that $Q_s(x)=x/f_s(x)$ and the Lagrange inversion formula (or residue calculus) for the change of variable $x=g_s(z)$, we get
\begin{align*}
[x^n] \big( e^{x\alpha} F (x, xu_1, \ldots, xu_{b-1}) \, Q_s^{n+1}(x)\big) 
&=\, \res_{x=0} \; \Bigl(\frac{e^{x\alpha} F (x, xu_1, \ldots, xu_{b-1}) }{f_s(x)^{n+1}}\, dx\Bigr)\\
&=\, [z^n] \, g'_s(z) \, e^{g_s(z)\alpha} \, F \big(g_s(z),g_s(z)u_1, \ldots, g_s(z)u_{b-1} \big).
\end{align*}
Therefore, 
\[
\E(t)\= g'_s(t) \; e^{g_s(t)\alpha} \; F(g_s(t),g_s(t)u_1, \ldots, g_s(t)u_{b-1}),
\] 
and substituting $t=f_s(x)$ yields
\[
F(x, xu_1, \ldots, xu_{b-1}) \= f'_s(x) e^{-x \alpha} \, \E\big(f_s(x) \big).
\]
Using the defining equation~\eqref{eq: def of E}  for $\E$ and applying Proposition~\ref{prop: mu(l(X)) with alpha term separated} cancels out the $e^{-x \alpha}$ factor, resulting in:
\[
F(x, xu_1, \ldots, xu_{b-1}) \,=\, \frac{f'_s(x)}{\big( 1-f_s(x)^2 \big)^{e(M)/2}} \ \exp \Big( \,\sum_{i=0}^{s}\langle g_{i+1}(f_s(x)(1 +\mathcal{U}) ), \, l_i\rangle \Big).
\]
Finally, to retrieve $F=F(p_0, \ldots, p_{b-1})$, we note that under the substitution $\eta=p_0 \mapsto x$, $p_i \mapsto xu_i$ $(1\leq i\leq b-1)$, the preimage of the class
\[
\Big\langle g_{i+1}\big(f_s(x)(1 +\mathcal{U}) \big), \, l_i\Big\rangle \; = \; \Big\langle g_{i+1}\Big(\frac{f_s(x)}{x}(x +x\mathcal{U}) \Big), \, l_i\Big\rangle
\]
is $\big\langle g_{i+1} \big(\frac{f_s(\eta)}{\eta}(\eta + \p) \big) , \, l_i \big\rangle$, which is exactly the corresponding summand of $H$. Hence
\[
G\; =\; \Phi(F) \; =\; \frac{f'_s(\eta)}{\big(1-f_s(\eta)^2 \big)^{e(M)/2}} \; \Phi(e^{H}). \qedhere
\]
\end{proof}
Finally, we provide a more explicit expression for the element~$H$ appearing in \autoref{thm: main formula for G}. For $x \in H_*(M)$ set 
\begin{equation}
    \varepsilon^k(x) \, \coloneqq \, \sum_{1 \leq i_1, \ldots, i_k \leq b-1} \langle e_{i_1} \ldots e_{i_k}, x \rangle p_{i_k} \ldots p_{i_1} \ \in \; H^*(\Minfty).
\end{equation}
For $k=0$ we set $\varepsilon^0(x) \, \coloneqq \, \langle 1, x\rangle.$
The next lemma explains the reversed order of the indexing for the $p$-classes: it absorbs the Koszul sign. 
\begin{lemma} \label{P^k is mu^k}
  For every positive integer $k$ and $x \in H_*(M)$ homogeneous of even degree we have: 
\[
\varepsilon^k(x)  \=  \langle \, \p^k, \,x \,\rangle \,.
\]
\end{lemma}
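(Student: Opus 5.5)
We will expand $\p^k$ directly in the graded-commutative algebra $H^*(M)\otimes H^*(\Minfty)$, with the Koszul sign convention of the Remark following~\eqref{eq: def of U}. Writing
\[
\p^k \= \sum_{1\le i_1,\ldots,i_k\le b-1} (e_{i_1}\otimes p_{i_1})(e_{i_2}\otimes p_{i_2})\cdots(e_{i_k}\otimes p_{i_k}),
\]
we move all the $e$-factors to the left and all the $p$-factors to the right, keeping track of signs, and then evaluate on $x$ via $\langle a\otimes b, x\rangle=\langle a,x\rangle\, b$. The claim is that the accumulated sign is exactly the one produced by reversing the order of the $p$'s. This justifies the ordering $p_{i_k}\cdots p_{i_1}$ in the definition of~$\varepsilon^k$.

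\textbf{The sign bookkeeping.} First we induct on $k$. Multiplying $(e_{i_1}\cdots e_{i_{k-1}}\otimes p_{i_{k-1}}\cdots p_{i_1})$ on the right by $(e_{i_k}\otimes p_{i_k})$ gives the Koszul sign $(-1)^{|p_{i_{k-1}}\cdots p_{i_1}|\,|e_{i_k}|}$. Here $|p_i|=2s-d_i\equiv d_i=|e_i| \pmod 2$, since the dimension $2s$ is even. Hence, mod~$2$, this sign is $(-1)^{d_{i_k}(d_{i_1}+\cdots+d_{i_{k-1}})}$.

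On the other side, graded commutativity in $H^*(\Minfty)$ gives
\[
p_{i_k}\,(p_{i_{k-1}}\cdots p_{i_1}) \= (-1)^{d_{i_k}(d_{i_1}+\cdots+d_{i_{k-1}})}\,(p_{i_{k-1}}\cdots p_{i_1})\,p_{i_k}.
\]
So the Koszul sign is exactly absorbed by writing the new $p$-factor on the left rather than the right. Inductively this yields
\[
\p^k \= \sum_{i_1,\ldots,i_k} e_{i_1}\cdots e_{i_k}\otimes p_{i_k}\cdots p_{i_1},
\]
with no residual sign.

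\textbf{Evaluation.} Applying $\langle -, x\rangle$, which is $(\ev_x\otimes\id)$, introduces no further sign, because the evaluation acts on the left tensor factor only. The result is precisely $\varepsilon^k(x)$.

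\textbf{Main obstacle.} The only delicate point is making sure the Koszul convention of the Remark is applied consistently, in particular that the sign involves $|b|\cdot|a'|$ in the stated order. We also need to check that no hidden sign enters the evaluation pairing. The parity fact $|p_i|\equiv|e_i|$ is what makes the cancellation exact.

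If $x$ is homogeneous of even degree, only terms with $\sum d_{i_j}=|x|$ even survive, but the identity above holds termwise regardless. So the evenness hypothesis is used only to match the conventions of Lemma~\ref{lemma: Delta_* is U^r}, if at all.
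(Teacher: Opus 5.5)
Your proof is correct and follows essentially the same argument as the paper. Because $|p_i|\equiv|e_i| \pmod 2$, the Koszul signs from moving the $e$'s past the $p$'s are exactly the signs produced by reversing the order of the $p$'s. The only difference is that you do the bookkeeping by induction on $k$, whereas the paper writes the total sign $\sum_{a<b} d_{i_a}d_{i_b}$ in one step.
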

\begin{proof}
Iterating the Koszul rule gives
\[
\p^k \= \sum_{1 \leq i_1, \ldots, i_k \leq b-1} (-1)^{\sum_{a<b} |e_{i_a}| |p_{i_b}|} \; e_{i_1} \ldots e_{i_k} \otimes p_{i_1} \ldots p_{i_k}.
\]
Modulo 2 we have $\deg p_i =2s-d_i \equiv d_i=\deg e_i$, so the exponent of the sign is $\sum_{a<b} d_{i_a} d_{i_b},$ the sum of the pairwise products of the degrees $d_{i_1}, \ldots, d_{i_k}$. On the other hand, this is also the sign we get when reversing the order in the product $p_{i_1} \ldots p_{i_k}$. Therefore, the reversed order of the $p$'s cancels out the sign, as claimed. 
\end{proof}
\begin{proposition} \label{prop: explicit formula for H} The class $H$ of Theorem~\ref{thm: main formula for G} is given by
    \begin{equation} 
        H\=\sum_{i=0}^{s} \sum_{k=0}^{2i} \frac{1}{k!}\Big(\frac{f_s(\eta)}{\eta} \Big)^k  g^{(k)}_{i+1}(f_s(\eta)) \, \varepsilon^k(l_i),
     \end{equation}
     where $g_{i+1}^{(k)}$ denotes the $k$-th derivative of the odd polylogarithm $g_{i+1}$.
\end{proposition}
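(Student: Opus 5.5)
The plan is a Taylor expansion of $g_{i+1}$ around $f_s(\eta)$, which turns $\langle g_{i+1}(\cdots),l_i\rangle$ into evaluations of powers of $\p$. Write
\[
\frac{f_s(\eta)}{\eta}(\eta+\p) \= f_s(\eta) + \frac{f_s(\eta)}{\eta}\,\p ,
\]
where $f_s(\eta)$ is understood as $f_s(\eta)\cdot 1\otimes 1$, a central element of even degree in $H^*(M)\otimes H^*(\Minfty)[[\eta]]$. Its degree-zero part in $H^*(M)$ is $1$. The element $\p$ lies in $H^{>0}(M)\otimes H^*(\Minfty)$, so it is nilpotent: $\p^k=0$ for $k>2s$.

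We then expand formally:
\[
g_{i+1}\Big(f_s(\eta)+\frac{f_s(\eta)}{\eta}\p\Big) \= \sum_{k\ge 0}\frac{1}{k!}\, g_{i+1}^{(k)}(f_s(\eta))\Big(\frac{f_s(\eta)}{\eta}\Big)^k \p^k .
\]
This is legitimate because $\p$ commutes with $f_s(\eta)$ and both have even total degree, so the sub-ring they generate is commutative. The identity can therefore be checked monomial by monomial, $(a+b)^r=\sum_k\binom rk a^{r-k}b^k$, and then summed over odd $r$. Convergence is as formal power series in $\eta$, since $f_s(\eta)=\eta+O(\eta^3)$ and $f_s(\eta)/\eta\in\Q[[\eta]]$. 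The sum terminates at $k=2s$ by nilpotence.

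Next we evaluate on $l_i$. The coefficients $g^{(k)}_{i+1}(f_s(\eta))(f_s(\eta)/\eta)^k$ lie in $1\otimes\Q[[\eta]]$. Since $\eta=p_0$ is even, they pull out of $\langle -,l_i\rangle$ without sign, because the evaluation acts only on the $H^*(M)$ factor. We then apply Lemma~\ref{P^k is mu^k}, $\langle\p^k,l_i\rangle=\varepsilon^k(l_i)$, which is valid since $l_i$ has even degree $2i$. For $k=0$ the term is $\langle 1,l_i\rangle=\varepsilon^0(l_i)$, consistent with the convention.

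Finally, $\langle \p^k,l_i\rangle$ pairs a class of $H^*(M)$-degree at least $k$ with $l_i\in H_{2i}(M)$. Only the $H^{2i}(M)$-component survives, and a product of $k$ positive-degree classes in degree $2i$ requires $k\le 2i$. This truncates the inner sum to $0\le k\le 2i$, and summing over $i$ gives the formula. The only delicate point is justifying the Taylor expansion in the graded tensor product, which is the commutativity and even-degree argument above. Everything else is bookkeeping.
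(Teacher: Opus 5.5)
Your proposal is correct and is essentially the paper's own proof. Both arguments Taylor-expand $g_{i+1}$ at $f_s(\eta)$, pull the $\Q[[\eta]]$-coefficients out of the pairing, use the lemma identifying $\langle \p^k, l_i\rangle$ with $\varepsilon^k(l_i)$, and truncate at $k=2i$ because every $e_j$ with $1\le j\le b-1$ has positive degree; your remarks on even-degree commutativity, nilpotence of $\p$ and $\eta$-adic convergence just make explicit what the paper leaves implicit.
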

\begin{proof}

Consider the (formal) Taylor expansion of $g_{i+1}$ at $f_s(\eta)$ and use \autoref{P^k is mu^k}:
\begin{align*}
    \Big\langle \, g_{i+1} \Big(\frac{f_s(\eta)}{\eta}(\eta + \p) \Big) , \, l_i \, \Big\rangle \, &=\,   \Big\langle \sum_{k \geq 0} \frac{1}{k!} \, g_{i+1}^{(k)} \big( f_s(\eta) \big) \, \Big( \frac{f_s(\eta)}{\eta} \Big)^k \; \p^k , \, l_i \Big\rangle\\
    &= \, \sum_{k=0}^{2i} \frac{1}{k!} \,\Big( \frac{f_s(\eta)}{\eta} \Big)^k \, g_{i+1}^{(k)} \big( f_s(\eta) \big) \, \varepsilon^k(l_i).
\end{align*}
The truncation at $k=2i=\deg(l_i)$ occurs since every $e_j \in H^*(M)$ with $1 \leq j \leq b-1$ has positive degree. 
Summing over $i$ yields the formula for~$H$ and completes the proof of Theorem~\ref{thmC}.
\end{proof}

\setlength{\parskip}{\storeparskip}


\begin{thebibliography}{99}
	
	\bibitem{adams1963vector}
	J. F. Adams, \emph{Vector fields on spheres}, Ann. of Math. (2),~{\bf 75} (1962), 603--632.
	
	\bibitem{banagl2026equivariant}
	M. Banagl, \emph{Equivariant $L$-classes of Atiyah--Singer--Zagier type for singular spaces}, Bulletin des Sciences
	Mathématiques (2026): 103885.
	
	\bibitem{cappell2017characteristic}
	S. E. Cappell, L. Maxim, J. Sch\"urmann, J. L. Shaneson, and S. Yokura, \emph{Characteristic classes of symmetric products of complex quasi-projective varieties}, J. Reine Angew. Math.,~{\bf 728} (2017), 35--63.
	
	\bibitem{cartier2007primer}
	P. Cartier, \emph{A primer of Hopf algebras}, Frontiers in Number Theory, Physics, and Geometry~II, Springer, Berlin (2007), 537--615.
	
	\bibitem{hirzebruch1966topological}
	F. Hirzebruch, \emph{Topological methods in algebraic geometry}, Vol. 175. Springer Berlin-Heidelberg-New York (1966).
	
	\bibitem{HirzGenSer}
	F. Hirzebruch, \emph{Lectures on the Atiyah-Singer theorem and its applications}, Berkeley, 
Summer 1968 (unpublished lecture notes).
	
	\bibitem{MacCurves}
	I. G. Macdonald, \emph{Symmetric products of an algebraic curve}, Topology,~{\bf 1}(4), (1962), 319--343.
	
	\bibitem{milnor1965structure}
	J. W. Milnor and J. C. Moore, \emph{On the structure of Hopf algebras}, Ann. of Math. (2),~{\bf 81} (1965), 211--264.
	
	\bibitem{milnor1974characteristic}
	J. W. Milnor and J. D. Stasheff, \emph{Characteristic classes}, Princeton Univ. Press (1957).
	
	\bibitem{serre1953groupes}
	J. P. Serre, \emph{Groupes d'homotopie et classes de groupes ab\'eliens}, Ann. of Math. (2),~{\bf 58} (1953), 258--294.
	
	\bibitem{spanier1966homology}
	E. H. Spanier, \emph{Algebraic topology}, McGraw-Hill, New York (1966).
	
	\bibitem{thom1958}
	R. Thom, \emph{Les classes caract\'eristiques de Pontrjagin des vari\'et\'es triangul\'ees}, Symp. Intern. Top. Alg., 
Universidad Nacional Aut\'onoma de M\'exico and UNESCO, Mexico City (1958), 54--67.
	
	\bibitem{zagier1972}
	D. Zagier, \emph{The Pontrjagin class of an orbit space}, Topology,~{\bf 11}(3), (1972), 253--264.
	
	\bibitem{ZagierThesis}
	D. Zagier, \emph{Equivariant Pontrjagin classes and applications to orbit spaces}, Lecture Notes in Math.~{\bf 290}, Springer, Berlin (1972).
	
	
	
\end{thebibliography}
\end{document}